\newtheorem{theorem}{Theorem}[section]
\newtheorem{lemma}[theorem]{Lemma}
\newtheorem{corollary}[theorem]{Corollary}
\newtheorem{proposition}[theorem]{Proposition}
\theoremstyle{definition}
\newtheorem{definition}[theorem]{Definition}
\theoremstyle{remark}
\newtheorem{example}[theorem]{Example}
\newcommand{\real}{{\mathbb R}}
\newcommand{\zed}{{\mathbb Z}}
\newcommand{\eg}{{\em e.g.}}
\newcommand{\del}{\partial}
\newcommand{\norm}[1]{\left\|{#1}\right\|}
\newcommand{\inv}{^{-1}}
\newcommand{\one}{{\mathbf{1}}}    
\newcommand{\Crit}{{\mathcal C}}        
\newcommand{\Index}{{\mathcal I}}       
\newcommand{\style}[1]{{\sc{#1}}}    
\newcommand{\crit}{{\mathcal C}}        
\newcommand{\Def}{{\sf Def}}            
\newcommand{\Bessel}{{\mathcal B}}       
\newcommand{\Fourier}{{\mathcal F}}       
\newcommand{\Family}{{\cal A}}       
\newcommand{\diam}{{\rm diam}}
\newcommand{\dchifloor}{{\lfloor d\chi\rfloor}}
\newcommand{\dchiceil}{{\lceil d\chi\rceil}}
\newcommand{\CF}{{\sf CF}}
\begin{document}

\title[Euler Transforms]{Euler-Bessel and Euler-Fourier Transforms}

\author{Robert Ghrist}
        \address{
        Departments of Mathematics and Electrical/Systems Engineering,
        University of Pennsylvania,
        Philadelphia PA, USA}
        \ead{ghrist@math.upenn.edu}

\author{Michael Robinson}
        \address{
        Department of Mathematics,
        University of Pennsylvania,
        Philadelphia PA, USA}
        \ead{robim@math.upenn.edu}
\maketitle

\begin{abstract}
We consider a topological integral transform of Bessel (concentric isospectral sets) type and Fourier (hyperplane isospectral sets) type, using the Euler characteristic as a measure. These transforms convert constructible $\zed$-valued functions to continuous $\real$-valued functions over a vector space. Core contributions include: the definition of the topological Bessel transform; a relationship in terms of the logarithmic blowup of the topological Fourier transform; and a novel Morse index formula for the transforms. We then apply the theory to problems of target reconstruction from enumerative sensor data, including localization and shape discrimination. This last application utilizes an extension of spatially variant apodization (SVA) to mitigate sidelobe phenomena.
\end{abstract}

\ams{65R10,58C35}

\maketitle

\section{Introduction}
\label{sec_Intro}

Integral transforms are inherently geometric and, as such, are invaluable to applications in reconstruction. The vast literature on integral geometry possesses hints that many such integral transforms are at heart topological: note the constant refrain of Euler characteristic throughout integral-geometric results such as Crofton's Theorem or Hadwiger's Theorems (see, \eg, \cite{KR}). The parallel appearance of integral transforms in microlocal analysis --- especially from the sheaf-theoretic literature \cite{KS} --- confirms the role of topology in integral transforms.

This paper considers particular integral transforms designed to extract geometric features from topological data. The key technical tool involved is \style{Euler calculus} --- a simple integration theory using the (geometric or o-minimal) Euler characteristic as a measure (or valuation, to be precise). We define two types of Euler characteristic integral transforms: one, a generalization of the Fourier transform; the other, a generalization of the Hankel or Bessel transform. These generalizations are denoted \style{Euler-Fourier} and \style{Euler-Bessel} transforms, respectively. These transforms are novel, apart from the foreshadowing in the recent work on Euler integration \cite{BG:PNAS}.

The contributions of this paper are as follow:
\begin{enumerate}
\item
    Definitions of the Euler-Bessel and Euler-Fourier transforms on a normed (resp. inner-product) vector space;
\item
    Index theorems for both transforms which concentrate the transforms onto sets of critical points, and thereby reveal Morse-theoretic connections;
\item
    Applications of the Euler-Bessel transform to target localization and shape-discrimination problems; and
\item
    An extension of spatially variant apodization (SVA) to Euler-Bessel transforms, with applications to sidelobe mitigation  and to shape discrimination.
\end{enumerate}

\section{Background: Euler calculus}
\label{sec_euler}

The Euler calculus is an integral calculus based on the topological Euler characteristic. The reader may find a simple explanation of Euler calculus in \cite{BG} and more detailed treatments in \cite{Schapira:op,Viro}. For simplicity, we work in a fixed class of suitably ``tame'' or \style{definable} sets and mappings. In brief, an \style{o-minimal structure} is a collection of so-called definable subsets of Euclidean space satisfying certain axioms; a definable mapping between definable sets is a map whose graph is also a definable subset of the product: see \cite{vdD}. For concreteness, the reader may substitute ``semialgebraic'' or ``piecewise-linear'' or ``globally subanalytic'' for definable.  All definable sets are finitely decomposable into open simplices in a manner that makes the \style{Euler characteristic} invariant. Given a finite partition of a definable set $A$ into definable sets $\sigma_\alpha$ definably homeomorphic to open simplices,
\begin{equation}
    \chi(A) := \sum_\alpha(-1)^{\dim\sigma_\alpha} .
\end{equation}
This Euler characteristic also possesses a description in terms of alternating sums of (local) homology groups, yielding a topological invariance (up to homeomorphism for general definable spaces; up to homotopy for compact definable spaces).

The Euler characteristic is additive: $\chi(A\cup B)=\chi(A)+\chi(B)-\chi(A\cap B)$. Thus,  one defines a scale-invariant  ``measure'' $d\chi$ and an integral via characteristic functions: $\int\one_Ad\chi:=\chi(A)$ for $A$ definable. The collection of functions from a definable space $X$ to $\zed$ generated by finite (!) linear combinations of characteristic functions over compact definable sets is the group of \style{constructible} functions, $\CF(X)$. The \style{Euler integral} is the linear operator $\int_X\,\cdot\, d\chi:\CF(X)\to\zed$ taking the characteristic function $\one_\sigma$ of an open $k$-simplex $\sigma$ to $(-1)^k$. By additivity of $\chi$, the integral is well-defined \cite{Schapira:op,Viro}. There are several formulae for the computation (and numerical approximation) of integrals with respect to $d\chi$ \cite{BG}.

The integral with respect to $d\chi$ is well-defined for more general constructible functions taking values in a discrete subset of $\real$; however, continuously-varying integrands are problematic. A recent extension of the Euler integral to $\real$-valued definable functions uses a limiting process \cite{BG:PNAS}. Let $\Def(X)$ denote the definable functions from $X$ to $\real$ (those whose graphs in $X\times\real$ are definable sets). There is a pair of dual extensions of the Euler integral, $\dchifloor$ and $\dchiceil$, defined as follows:
\begin{equation}
\hspace{-0.5in}
    \int_X h\,\dchifloor
    =
    \lim_{n\to\infty}
    \frac{1}{n}\int_X \lfloor nh\rfloor d\chi ,
    \quad
    \int_X h\,\dchiceil
    =
    \lim_{n\to\infty}
    \frac{1}{n}\int_X \lceil nh\rceil d\chi .
\end{equation}
These limits exist and are well-defined, though {\em not equal} in general. The \style{triangulation theorem} for $\Def(X)$ \cite{vdD} states that to any $h\in\Def(X)$, there is a definable triangulation (a definable bijection to a disjoint union of open affine simplices in some Euclidean space) on which $h$ is affine on each open simplex. From this, one may reduce all questions about the integrals over $\Def(X)$ to questions of affine integrands over simplices, using the additivity of the integral. Using this reduction technique, one proves the following computational formulae \cite{BG:PNAS}:

\begin{theorem}
\label{thm:eulerintformulae}
For $h\in\Def(X)$,
\begin{eqnarray}
    \label{eq:eulerexcursion}
    \int_X h\,\dchifloor
    &=&
    \int_{s=0}^\infty \chi\{h\geq s\} - \chi\{h<-s\}\,ds
    \\
    \int_X h\,\dchiceil
    &=&
    \int_{s=0}^\infty \chi\{h>s\} - \chi\{h\leq-s\}\,ds
    .
\end{eqnarray}
\end{theorem}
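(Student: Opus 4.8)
The plan is to bypass the triangulation-to-affine-pieces reduction and instead derive both identities directly from a discrete ``layer-cake'' decomposition of the integer-valued integrand $\lfloor nh\rfloor$ (resp. $\lceil nh\rceil$), followed by passage to a Riemann-sum limit. The entire argument rests on one elementary identity for integer-valued constructible functions. If $g\in\CF(X)$ takes values in $\zed$, then writing $g=\sum_k k\,\one_{\{g=k\}}$ and using linearity of $\int\,\cdot\,d\chi$ gives $\int_X g\,d\chi=\sum_{k\in\zed}k\,\chi\{g=k\}$; resumming this by the discrete analogue of Fubini yields
\begin{equation}
    \int_X g\,d\chi=\sum_{j\geq1}\chi\{g\geq j\}-\sum_{j\geq1}\chi\{g\leq-j\},
\end{equation}
a finite sum since $g$ is bounded on its compact support. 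This is the only place additivity of $\chi$ enters, and it holds verbatim for every such $g$.

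Next I would specialize to $g=\lfloor nh\rfloor$. The key observation is that the excursion sets of the floor are excursion sets of $h$ at rational thresholds: for integer $j$ one has $\{\lfloor nh\rfloor\geq j\}=\{h\geq j/n\}$ and $\{\lfloor nh\rfloor\leq-j\}=\{h<(1-j)/n\}$. Substituting these into the layer-cake identity and dividing by $n$ produces
\begin{equation}
    \frac{1}{n}\int_X\lfloor nh\rfloor\,d\chi
    =\frac{1}{n}\sum_{i\geq0}\chi\{h\geq (i+1)/n\}-\frac{1}{n}\sum_{i\geq0}\chi\{h<-i/n\},
\end{equation}
which I recognize as a pair of mesh-$1/n$ Riemann sums (right endpoints and left endpoints, respectively) for $\int_0^\infty\chi\{h\geq s\}\,ds$ and $\int_0^\infty\chi\{h<-s\}\,ds$. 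Letting $n\to\infty$ recovers exactly \rmref{eq:eulerexcursion}. The ceiling formula follows identically; the only change is the direction of the inequalities forced by $\{\lceil nh\rceil\geq j\}=\{h>(j-1)/n\}$ and $\{\lceil nh\rceil\leq-j\}=\{h\leq-j/n\}$, which replace the pair $(\geq,<)$ by $(>,\leq)$. This book-keeping is precisely what makes the two transforms genuinely differ.

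The main obstacle is the convergence in the limiting step, and this is exactly where o-minimality, rather than mere measurability, is essential. I would invoke the triangulation theorem to conclude that $h$ attains only finitely many ``critical values,'' so that the functions $s\mapsto\chi\{h\geq s\}$ and $s\mapsto\chi\{h<-s\}$ are bounded, compactly supported step functions of $s$ with finitely many jump discontinuities. For such integrands the left- and right-endpoint Riemann sums both converge to the ordinary integral as $1/n\to0$, and the discrepancy between sampling at $(i+1)/n$ versus $i/n$ washes out in the limit. This simultaneously guarantees that the defining limits exist (so the integrals are well-defined) and that the one-sided excursion integrals on the right-hand sides are finite.
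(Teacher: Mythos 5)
Your proof is correct, and it takes a genuinely different route from the one the paper sketches. The paper (following \cite{BG:PNAS}) uses the triangulation theorem as the \emph{computational} engine: reduce to $h$ affine on each open simplex of a finite definable triangulation, verify the formulae explicitly for affine integrands on simplices, and assemble by additivity of the integral. You instead work globally: the discrete layer-cake identity $\int_X g\,d\chi=\sum_{j\geq1}\chi\{g\geq j\}-\sum_{j\geq1}\chi\{g\leq-j\}$ for integer-valued $g\in\CF(X)$ is exact, and your observation that the excursion sets of $\lfloor nh\rfloor$ and $\lceil nh\rceil$ are precisely excursion sets of $h$ at thresholds $j/n$ --- with the floor forcing the pair $(\geq,<)$ and the ceiling forcing $(>,\leq)$ --- is the right bookkeeping and makes transparent \emph{why} the two integrals differ, something the simplex-by-simplex computation obscures. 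In your argument the triangulation theorem is demoted to a pure regularity input: it guarantees that $s\mapsto\chi\{h\geq s\}$ and its three companions are compactly supported step functions with finitely many jumps, so the left- and right-endpoint Riemann sums both converge, and the existence of the defining limits $\lim_n \frac1n\int\lfloor nh\rfloor\,d\chi$ falls out as a byproduct rather than requiring a separate verification. What the paper's reduction buys in exchange is uniformity: once everything is affine on simplices, the same mechanism yields the other computational formulae of \cite{BG:PNAS} and handles discontinuous definable $h$ via cell decomposition, whereas your route needs the (mild, but worth stating) observation that the finite triangulation with bounded simplices makes $h$ bounded, so that $\lfloor nh\rfloor\in\CF(X)$, your layer-cake sums are finite, and the step functions in $s$ have compact support --- you assert boundedness for $g$ but should flag that for $g=\lfloor nh\rfloor$ it is inherited from $h$ via exactly this compactness. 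With that point made explicit, your argument is complete and arguably more self-contained than the paper's.
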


This integral is coordinate-free, in the sense of being invariant under right-actions of homeomorphisms of $X$; however, the integral operators are {\em not linear}, nor, since $-\int h\dchifloor=\int h\dchiceil$, are they even homogeneous with respect to negative coefficients.  The compelling feature of the measures $\dchifloor$ and $\dchiceil$ is their relation to stratified Morse theory \cite{GM}.



Let $\Crit\subset X$ denote the set of critical points of $h$. For arbitrary $p\in \Crit$, the \style{co-index} of $p$, $\Index^*(p)$, is defined as
\begin{equation}
    \Index^*(p) = \lim_{\epsilon'\ll\epsilon\to 0^+}
        \chi\left(
        B_\epsilon(p)
        \cap
        \{h<h(p)+\epsilon'\}
        \right) ,
\end{equation}
where $B_\epsilon(p)$ denotes the closed ball in $X$ of radius $\epsilon$ about $p$. The dual \style{index} at $p$ is given by
\begin{equation}
    \Index_*(p) = \lim_{\epsilon'\ll\epsilon\to 0^+}
        \chi\left(
        B_\epsilon(p)
        \cap
        \{h>h(p)-\epsilon'\}
        \right) ,
\end{equation}

 \begin{theorem}[Theorem 4 of \cite{BG:PNAS}]
\label{thm:morseeuler}
If $h$ is continuous and definable on $X$, then:
\begin{equation}
\label{eq:stratmorse}
    \int_X h\,\dchifloor
    =
    \int_X h\, \Index^*\, d\chi ,
\quad
    \int_X h\,\dchiceil
    =
    \int_X h\, \Index_*\, d\chi.
\end{equation}
\end{theorem}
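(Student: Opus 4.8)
The plan is to prove the first identity, $\int_X h\,\dchifloor = \int_X h\,\Index^*\,d\chi$; the second then follows by applying the first to $-h$, using the duality $\int_X h\,\dchiceil = -\int_X(-h)\,\dchifloor$ together with $\Index_*^h = \Index^*_{-h}$ (both immediate from the definitions in the excerpt). Throughout, $\chi$ denotes the compactly-supported (combinatorial) Euler characteristic determined by $d\chi$; keeping track of open versus closed sets, so that half-open pieces correctly contribute $\chi=0$, is what makes the argument run.

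First I would rewrite the left-hand side via Theorem \ref{thm:eulerintformulae} as $\int_{s=0}^\infty \chi\{h\ge s\}-\chi\{h<-s\}\,ds$. Since $h$ is definable, its set of critical values $\{c_1<\dots<c_k\}$ is finite, so $s\mapsto\chi\{h\ge s\}$ and $s\mapsto\chi\{h<-s\}$ are step functions jumping only at (signed) critical values. Using additivity in the form $\chi\{h\ge s\}=\chi(X)-\chi\{h<s\}$, both terms are controlled by the jumps $K_j:=\chi\{h<c_j+\delta\}-\chi\{h<c_j-\delta\}$ of the sublevel-set Euler characteristic (for $0<\delta\ll1$), and a telescoping in $s$ collapses the expression to $\sum_j c_j K_j$, with the constant $\chi(X)$ cancelling between the two terms. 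Note that $K_j=\chi\{c_j-\delta\le h<c_j+\delta\}$ is the jump of the sublevel Euler characteristic across the critical value $c_j$.

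Next I would analyze the right-hand side. The co-index $\Index^*$ is a constructible function on $X$ supported on $\Crit$: at a regular point the set $B_\epsilon(p)\cap\{h<h(p)+\epsilon'\}$ is a closed ball with a closed cap deleted, so its compactly-supported Euler characteristic is $1-1=0$. Since $h$ takes only the finitely many values $c_j$ on $\Crit$, the product $h\,\Index^*$ is genuinely constructible and $\int_X h\,\Index^*\,d\chi = \sum_j c_j w_j$ with $w_j := \int_{\{h=c_j\}}\Index^*\,d\chi$. Matching the two sides thus reduces to the single local identity $K_j=w_j$ for each critical value.

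The crux, and the main obstacle, is this local identity, for which stratified Morse theory \cite{GM} is essential. The plan is to localize the band $N_j=\{c_j-\delta\le h<c_j+\delta\}$ near the critical set $\Crit\cap\{h=c_j\}$: over the complement of a tubular neighborhood $U$ of this set, $h$ is a proper stratified submersion onto the half-open interval $[c_j-\delta,c_j+\delta)$ (Thom's isotopy lemma), so $N_j\setminus U$ is a definable fiber bundle over a base with vanishing compactly-supported Euler characteristic and contributes $\chi=0$ by multiplicativity. Hence $K_j=\chi(N_j\cap U)$, and by the Fubini theorem for Euler integrals this equals $\int_{\{h=c_j\}}(\mbox{fiberwise }\chi)\,d\chi$, the fiber over $p$ being the local band $B_\epsilon(p)\cap\{c_j-\delta\le h<c_j+\delta\}$. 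It then remains to identify this fiberwise quantity with $\Index^*(p)$: by additivity it equals $\chi(B_\epsilon(p)\cap\{h<c_j+\delta\})-\chi(B_\epsilon(p)\cap\{h<c_j-\delta\})$, whose first term is exactly $\Index^*(p)$, while the second vanishes because $c_j-\delta$ is a regular value in $B_\epsilon(p)$ and the closed sublevel set $B_\epsilon(p)\cap\{h\le c_j-\delta\}$ deformation retracts, via the stratified gradient flow, onto its regular boundary level set $B_\epsilon(p)\cap\{h=c_j-\delta\}$, forcing the compactly-supported Euler characteristic of the open sublevel set to be zero. Justifying the stratified local-triviality and retraction statements in the general definable setting, where the critical set may be positive-dimensional, and bookkeeping the open/closed conventions so that the signs of these compactly-supported Euler characteristics come out correctly, is where the real work lies.
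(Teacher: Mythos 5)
The paper does not actually prove this statement --- it imports it verbatim as Theorem 4 of \cite{BG:PNAS} --- so the comparison is against the source's argument, and your skeleton is essentially the same route taken there: rewrite $\int_X h\,\dchifloor$ via the excursion-set formula of Theorem \ref{thm:eulerintformulae}, use o-minimal finiteness of the critical value set to see that $s\mapsto\chi\{h\geq s\}$ is a step function, telescope to $\sum_j c_j K_j$ with $K_j$ the jump of the sublevel-set Euler characteristic, and identify each jump with $\int_{\{h=c_j\}}\Index^*\,d\chi$ by stratified Morse theory. Your global bookkeeping is correct and checkable: the duality reduction $\int h\,\dchiceil=-\int(-h)\,\dchifloor$ with $\Index_*^h=\Index^*_{-h}$ is right, the telescoping (including the cancellation of $\chi(X)$ and the sign on negative critical values) is right, and the vanishing of $\Index^*$ at regular points is correct, though your one-line justification is manifold-local; at points where $X$ is singular but $h$ is regular one needs the product structure of local Morse data rather than the ``ball minus cap'' picture. (You should also note that compact support of $h$ is needed for the $s$-integral to converge and for the critical values to be finitely many with extremes attained; the paper's conventions build this in.)

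The genuine gap is in the crux, the local identity $K_j=w_j$, and it is concrete rather than merely a matter of deferred routine work. First, the fibers of a tubular-neighborhood projection $\pi:U\to\Crit\cap\{h=c_j\}$ are \emph{not} the metric balls $B_\epsilon(p)$ used to define $\Index^*$: nearby balls overlap, so your assertion that ``the fiber over $p$ is $B_\epsilon(p)\cap\{c_j-\delta\leq h<c_j+\delta\}$'' is false as stated, and Fubini yields $\int\chi\bigl(\pi\inv(p)\cap N_j\bigr)\,d\chi(p)$, not $\int\Index^*\,d\chi$. Closing this requires the independence of local Morse data from the choice of local slice (the local structure theorems of \cite{GM}, or the o-minimal local conic structure theorem together with definable triviality of the family $p\mapsto B_\epsilon(p)\cap\{h<c_j+\delta\}$, which also supplies the uniform choice of $\epsilon,\delta$ over a possibly positive-dimensional critical set). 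Second, your Thom--Mather step has the quantifiers in the wrong order: with $\delta$ fixed before $U$, the map $h:N_j\setminus U\to[c_j-\delta,c_j+\delta)$ need not be a proper stratified submersion, because level sets $\{h=v\}$ can be tangent to $\del U$ for $v$ in that interval; you must first choose $U$ with frontier transverse to $\{h=c_j\}$ (else $c_j$ itself is a critical value of $h$ restricted to $\del U$ and no $\delta$ works) and only then shrink $\delta$ to avoid the finitely many critical values of $h|_{\del U}$. With these repairs --- all available from standard stratified/o-minimal machinery --- your argument goes through, but as written the key lemma is asserted via a mechanism that does not literally work.
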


This has the effect of {\em concentrating} the measure $\dchifloor$ on the critical points of the distribution. In the case of $h$ a Morse function on an $n$-manifold $M$, for each critical point $p\in\Crit_x$, $\Index^*(p)=(-1)^{\dim M - \mu(p)}$ and $\Index_*(p)=(-1)^{\mu(p)}$, where $\mu(p)$ is the Morse index of $p$. Thus, if $h$ is a Morse function, then:
\begin{equation}
\label{eq:morse}
\hspace{-0.5in}
    \int_M h\,\dchifloor
    =
    \sum_{p\in\crit(h)}(-1)^{n-\mu(p)}h(p)
\quad
    \int_M h\,\dchiceil
    =
    \sum_{p\in\crit(h)}(-1)^{\mu(p)}h(p)
\end{equation}

\section{Definition: Euler-Bessel and Euler-Fourier transforms}
\label{sec_bessel}

There are a number of interesting integral transforms based on $d\chi$, including convolution and Radon-type transforms \cite{Brocker,Schapira:tom}. We introduce two Euler integral transforms on vector spaces for use in signal processing problems.

\subsection{Bessel}
For the Eulerian generalization of a Bessel transform, let $V$ denote a finite-dimensional vector space with (definable, continuous) norm $\norm{\cdot}$, and let $B_r(x)$ denote the compact ball of points $\{y \, : \, \norm{y-x}\leq r\}$. Recall that $\CF$ denotes compactly-supported definable integer-valued functions.

\begin{definition}
For $h\in \CF(V)$ define the \style{Bessel transform} of $h$ via
\begin{equation}
    \Bessel h(x) = \int_0^\infty\int_{\partial B_r(x)}h\,d\chi\,dr .
\end{equation}
\end{definition}

This transform Euler-integrates $h$ over the concentric spheres at $x$ of radius $r$, and Lebesgue-integrates these spherical Euler integrals with respect to $r$. For the Euclidean norm, these isospectral sets are round spheres. Given our convention that $\CF(V)$ consists of compactly supported functions, $\Bessel:\CF(V)\to\zed$ is well-defined using standard o-minimal techniques (the Conic Theorem \cite{vdD}).


%

\subsection{Fourier}

There is a similar integral transform that is best thought of as a topological version of the Fourier transform. This is a global version of the microlocal Fourier-Sato transform on the sheaf $\CF(V)$ \cite{KS}. For this transform, an inner product on $V$ must be specified. The Fourier transform takes as its argument a covector $\xi\in V^*$.

\begin{definition}
For $h\in \CF(V)$ define the \style{Fourier transform} of $h$ in the direction $\xi\in V^*$ via
\begin{equation}
    \Fourier h(\xi) = \int_0^\infty\int_{\xi\inv(r)}h\,d\chi\,dr .
\end{equation}
\end{definition}

\begin{example}
\label{ex:fourierwidth}
For $A$ a compact convex subset of $\real^n$ and $\norm{\xi}=1$, $(\Fourier\one_A)(\xi)$ equals the projected length of $A$ along the $\xi$-axis.
\end{example}

The Bessel transform can be seen as a Fourier transform of the log-blowup. This perspective leads to results like the following.

\begin{proposition}
\label{prop:fourierbessel}
The Bessel transform along an asymptotic ray is the Fourier transform along the ray's direction: for $h\in\CF(V)$ and $x\neq 0\in V$,
\begin{equation}
    \lim_{\lambda\to\infty}(\Bessel h)(\lambda x) = (\Fourier h)\left(\frac{x^*}{\norm{x^*}}\right) .
\end{equation}
where $x^*$ is the dual covector.
\end{proposition}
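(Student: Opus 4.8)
The plan is to use the elementary fact that a sphere whose centre $\lambda x$ recedes to infinity looks, near the fixed compact support of $h$, more and more like an affine hyperplane orthogonal to $x$; the radial variable $r$ of the Bessel transform then plays the role of the (reflected) height variable of the Fourier transform. Throughout I equip $V$ with the inner-product norm, so that both transforms are defined, and I set $\xi = x^*/\norm{x^*}$; under the inner product, $\xi(y) = \langle x, y\rangle/\norm{x}$ is the signed projection of $y$ onto the $x$-axis. Since $h\in\CF(V)$ is compactly supported, fix $R$ with $\mathrm{supp}(h)\subseteq B_R(0)$. Every spherical or planar slice below is implicitly intersected with this fixed compact set, so all Euler integrals are finite and, by o-minimal finiteness, uniformly bounded across the family, and every $r$- or $t$-integral has effectively compact range.

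First I would record the controlling asymptotic. For $y$ in the support, writing $y_\perp$ for the component of $y$ orthogonal to $x$,
\[
\norm{y-\lambda x}^2 = (\lambda\norm{x}-\xi(y))^2 + \norm{y_\perp}^2 ,
\]
and since $\norm{y_\perp}\le R$ there this gives $\norm{y-\lambda x} = \lambda\norm{x}-\xi(y)+O(1/\lambda)$ uniformly in $y$. Consequently $\partial B_r(\lambda x)$ meets $\mathrm{supp}(h)$ only for $r$ within $R+O(1/\lambda)$ of $\lambda\norm{x}$, and on that window it is a uniformly small $C^0$-perturbation of the hyperplane $\xi^{-1}(t)$ with $t=\lambda\norm{x}-r$. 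This dictates the substitution $t=\lambda\norm{x}-r$, under which
\[
(\Bessel h)(\lambda x) = \int_{-\infty}^{\lambda\norm{x}}\Big(\int_{\partial B_{\lambda\norm{x}-t}(\lambda x)} h\,d\chi\Big)\,dt ,
\]
the integrand vanishing for $\abs{t}>R+O(1/\lambda)$.

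The crux is to show that, for all but finitely many $t$, the inner Euler integral converges to $\int_{\xi^{-1}(t)}h\,d\chi$ as $\lambda\to\infty$. This is the one genuinely delicate point: the Euler integral is a discrete, homeomorphism-type invariant, so the mere $C^0$-convergence of the sphere-slices to a hyperplane does not by itself force their Euler characteristics to converge. The fix is o-minimality. I would assemble the slices into a single definable family over the parameter $u=1/\lambda$, adjoining the hyperplane slice at $u=0$, and apply Hardt's theorem on definable triviality: the family is topologically trivial over a small interval $u\in(0,\delta)$, with each fibre definably homeomorphic to the central slice $\xi^{-1}(t)\cap\mathrm{supp}(h)$, away from the finitely many critical values of $t$ at which the hyperplane slice itself changes topology. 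At every non-critical $t$ the inner integral is therefore eventually constant in $\lambda$ and equal to $\int_{\xi^{-1}(t)}h\,d\chi$; the exceptional $t$ form a finite, hence null, set and do not affect the $dt$-integral.

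Finally I would pass to the limit under the outer integral. The inner integrals are uniformly bounded and supported in a fixed compact $t$-interval, so dominated convergence gives
\[
\lim_{\lambda\to\infty}(\Bessel h)(\lambda x) = \int_{t\in\real}\Big(\int_{\xi^{-1}(t)} h\,d\chi\Big)\,dt = (\Fourier h)(\xi) ,
\]
the last equality being the reassembly of the hyperplane slices $\xi^{-1}(t)$ into the Fourier transform in the direction $\xi=x^*/\norm{x^*}$. I expect essentially all of the work to sit in the o-minimal triviality step that legitimises the slicewise convergence of Euler integrals; the spherical-to-planar approximation and the bookkeeping of the substitution are routine.
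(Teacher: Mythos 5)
Your overall route is the same as the paper's: the paper's entire proof is the single sentence that the isospectral sets, restricted to the compact support of $h$, converge in the limit with identical scalings, and your substitution $t=\lambda\norm{x}-r$, the uniform expansion $\norm{y-\lambda x}=\lambda\norm{x}-\xi(y)+O(1/\lambda)$, and the dominated-convergence step correctly supply the bookkeeping the paper leaves implicit. You also correctly isolate the genuinely delicate point, which the paper glosses over entirely: $C^0$-convergence of the sphere-slices to hyperplane-slices does not by itself force convergence of their Euler integrals.

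However, the tool you invoke to close that point does not, as stated, close it. Hardt's theorem gives a finite definable partition of the base $(u,t)\in[0,\delta)\times\real$ (with $u=1/\lambda$) and definable triviality over each cell; it says nothing about how fibers over an open cell compare with fibers over its boundary. So it does give you that for fixed $t$ the inner integral $F(u,t)=\int_{\del B_{\lambda\norm{x}-t}(\lambda x)}h\,d\chi$ is eventually constant in $u$, but not that this eventual value equals $F(0,t)$: fibers of a definable family can jump at the boundary cell $u=0$, as in the family $\{y\in\real : y^2=u\}$ over $u\in[0,\delta)$, whose fibers have $\chi=2$ for $u>0$ but $\chi=1$ at $u=0$. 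Your phrase ``each fibre definably homeomorphic to the central slice'' is exactly the unproven claim. The repair is transversality rather than triviality: the functions $g_u(y):=\lambda\norm{x}-\norm{y-\lambda x}$ converge to $\xi$ in $C^1$ on the compact support, with definable dependence on $u$; fix a definable Whitney stratification adapted to $h$; for $t$ outside the finite set of stratified critical values of $\xi$, the hyperplane $\xi^{-1}(t)$ is transverse to every stratum, transversality persists under small $C^1$-perturbation, and Thom's first isotopy lemma then gives stratum-preserving homeomorphisms $g_u^{-1}(t)\cong\xi^{-1}(t)$ for all small $u$, whence $F(u,t)=F(0,t)$ for generic $t$, which is what your dominated-convergence step needs. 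With that substitution your argument is complete (modulo the normalization, shared with the paper's own Example \ref{ex:fourierwidth}, that the level-set integral defining $\Fourier$ is taken over the full range of levels meeting the support rather than only $r\geq 0$).
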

\begin{proof}
The isospectral sets restricted to the (compact) support of $h$ converge in the limit and the scalings are identical.
\end{proof}

While the Fourier transform obviously measures a ``width'' associated to a constructible function, the geometric interpretation of the Bessel transform is more involved. The next section explores this geometric content via index theory.

\section{Computation: Index-theoretic formulae}
\label{sec_index}

The principal results of this paper is are index formulae for the Euler-Bessel and Euler-Fourier transforms that reduce the integrals to critical values.


\begin{lemma}
\label{lem:cone}
For $A\subset V$ the closure of a open subset of $V$, star-convex with respect to $x\in A$,
\begin{equation}
    \Bessel\one_A(x) = \int_{\del A}d_x\,\dchifloor ,
\end{equation}
where $d_x$ is the distance-to-$x$ function $d_x:V\to\real^+$.
\end{lemma}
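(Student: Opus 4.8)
The plan is to reduce both sides to ordinary (Lebesgue) integrals of an Euler characteristic depending on a single scalar parameter, and then match the two integrands level by level. For the left-hand side the definition of $\Bessel$ gives immediately
\[
    \Bessel\one_A(x) = \int_0^\infty \chi\left(\del B_r(x)\cap A\right)\,dr .
\]
For the right-hand side I would apply the excursion formula \rmref{eq:eulerexcursion} of Theorem~\ref{thm:eulerintformulae} to the definable function $h=d_x$ on the definable set $\del A$. Since $d_x\geq 0$ everywhere, the sublevel term $\chi\{d_x<-s\}$ vanishes for all $s>0$, leaving
\[
    \int_{\del A} d_x\,\dchifloor = \int_0^\infty \chi\left(\del A\cap\{d_x\geq r\}\right)\,dr .
\]
It therefore suffices to prove, for every $r>0$, the level-wise identity $\chi(\del B_r(x)\cap A)=\chi(\del A\cap\{d_x\geq r\})$.

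The bridge between the two is the radial description of $A$ furnished by star-convexity. Let $S$ denote the unit sphere of directions in $V$. Star-convexity with respect to $x$ means every ray $\{x+t\theta:t\geq 0\}$ meets $A$ in a single closed interval $[0,\rho(\theta)]$, defining the definable radial function $\rho(\theta)=\sup\{t\geq 0:x+t\theta\in A\}$. For interior $x$ the radial map $\theta\mapsto x+\rho(\theta)\theta$ is a homeomorphism of $S$ onto $\del A$, and $d_x$ pulls back to $\rho$ along it; hence $\del A\cap\{d_x\geq r\}$ is carried homeomorphically onto $U_r:=\{\theta\in S:\rho(\theta)\geq r\}$. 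Likewise the map $\theta\mapsto x+r\theta$ is a homeomorphism $S\to\del B_r(x)$ carrying $U_r$ onto $\del B_r(x)\cap A$, since by star-convexity $x+r\theta\in A$ iff $r\leq\rho(\theta)$. As the Euler characteristic is a definable-homeomorphism invariant, both target sets have Euler characteristic $\chi(U_r)$, and integrating in $r$ yields the claimed equality.

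I expect the genuine obstacle to lie not in the integral manipulations, which are formal, but in establishing that the radial maps above are honest homeomorphisms rather than mere definable bijections. This distinction matters, since a definable bijection need not preserve $\chi$; what rescues the argument is that the hypothesis ``$A$ is the closure of an open set'' forces $\rho$ to be continuous. Indeed, $\rho$ is always upper semicontinuous (limits of boundary points lie in the closed set $A$), and lower semicontinuity follows because any radial ``cliff'' would place an interior point of $A$ as a limit of points outside $A$, a contradiction. Continuity of $\rho$ then upgrades each radial bijection to a homeomorphism, legitimizing the invariance of $\chi$. The remaining care concerns the center $x$ lying on $\del A$ and directions with $\rho(\theta)=0$ pointing out of $A$: such directions never enter $U_r$ for $r>0$, so they are invisible to both integrands, and the tame (o-minimal, Conic Theorem) structure together with compactness of $A$—forced by $\one_A\in\CF(V)$—handles these degenerate loci. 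Granting the level-wise identity $\chi(\del B_r(x)\cap A)=\chi(U_r)=\chi(\del A\cap\{d_x\geq r\})$, the lemma follows by integration.
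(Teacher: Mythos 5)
Your integral manipulations are exactly the paper's: the definition of $\Bessel$ gives $\int_0^\infty\chi(A\cap\del B_r(x))\,dr$ on the left, the excursion formula (\ref{eq:eulerexcursion}) applied to $d_x\geq 0$ gives $\int_0^\infty\chi\left(\del A\cap\{d_x\geq r\}\right)dr$ on the right, and everything hinges on the level-wise equality of Euler characteristics, which the paper phrases via the logarithmic blowup (your polar coordinates) and a homeomorphism claim. Half of your bridge is sound: since $A$ is closed and star-convex, each ray meets $A$ in $[0,\rho(\theta)]$, $\rho$ is upper semicontinuous, so $U_r=\{\rho\geq r\}$ is closed and $\theta\mapsto x+r\theta$ carries $U_r$ homeomorphically onto $A\cap\del B_r(x)$.

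The other half contains a genuine gap: under the lemma's hypotheses, $\rho$ need \emph{not} be continuous and $\theta\mapsto x+\rho(\theta)\theta$ need \emph{not} be a homeomorphism of $S$ onto $\del A$, even for interior $x$. Concretely, in $\real^2$ with $x=0$, take $A=\{t\theta_\phi:0\leq t\leq\rho(\phi)\}$ in polar coordinates, where $\rho\equiv 1$ outside $[0,\phi_0]$ and $\rho$ decreases linearly from $2$ to $1$ on $[0,\phi_0]$. This $A$ is compact, definable, star-convex with respect to the interior point $0$, and equal to the closure of its interior (the sliver's lower edge is a limit of interior points with $\phi\to 0^+$); yet $\rho$ jumps at $\phi=0$, and $\del A$ contains the radial segment $\{(t,0):1\leq t\leq 2\}$, of which the graph-of-$\rho$ map hits only the endpoint $(2,0)$. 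Your lower-semicontinuity argument fails precisely here: it tacitly assumes that points $x+t\theta$ with $t<\rho(\theta)$ are interior to $A$, and ``closure of an open set'' does not force this --- such points can be boundary points. The repair stays inside your framework: replace the homeomorphism by the radial projection $\del A\cap\{d_x\geq r\}\to U_r$ and show its fibers are closed intervals --- if $x+t\theta\in\del A$ with $r\leq t<t'\leq\rho(\theta)$, choose $q_k\to x+t\theta$ with $q_k\notin A$; the rays through $q_k$ have $\rho<t'$ eventually, so the points at radius $t'$ on those rays lie outside $A$ and converge to $x+t'\theta\in A$, whence $x+t'\theta\in\del A$. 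Since a proper definable surjection whose fibers all have $\chi=1$ preserves Euler characteristic (o-minimal Fubini), $\chi\left(\del A\cap\{d_x\geq r\}\right)=\chi(U_r)=\chi(A\cap\del B_r(x))$, and your integration step finishes the proof. (For what it is worth, the paper's own proof simply asserts the homeomorphism $A\cap\del B_r(x)\cong\del A\cap\{d_x\geq r\}$ for star-convex top-dimensional $A$ without addressing this point; the fiber-wise $\chi$ argument is what makes the step robust.)
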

\begin{proof}
Consider the logarithmic blowup taking $V-\{x\}\cong S^{n-1}\times\real^+$, with the second coordinate being $\norm{y-x}$. The level sets of the $\real^+$ coordinate of the blowup are precisely the isospectral sets of $\Bessel(x)$. The induced height function $d_x:\del A\to\real^+$ is well-defined on the unit tangent sphere of $x$, $\del_A\cong S^{n-1}$, since $A$ is star-convex with respect to $x$. By definition,
\[
            \Bessel h (x) = \int_0^\infty \chi(A\cap \del B_r(x)) ds .
\]
For $A$ star-convex and top-dimensional, $A\cap \del B_r(x)$ is homeomorphic to $\del A\cap\{d_x\geq r\}$. By Equation (\ref{eq:eulerexcursion}),
\[
            \Bessel h (x) = \int_0^\infty \chi(\del A\cap\{d_x\geq r\})dr = \int_{\del A} d_x\,\dchifloor .
\]
\end{proof}

This theorem is a manifestation of Stokes' Theorem: the integral of the distance over $\del A$ equals the integral of the `derivative' of distance over $A$. For non-star-convex domains, it is necessary to break up the boundary into positively and negatively oriented pieces. These orientations implicate $\dchifloor$ and $\dchiceil$ respectively.

\begin{theorem}
\label{thm:besseldchi}
For $A\subset V$ the closure of a definable bounded open subset of $V$ and $x\in V$, decompose $\del A$ into $\del A=\del_x^+A\cup\del_x^-A$, where $\del_x^\pm A$ are the (closure of) subsets of $\del A$ on which the outward-pointing halfspaces  contain (for $\del_x^-$) or, respectively, do not contain (for $\del_x^+$) $x$. Then,
\begin{eqnarray}
\label{eq:besselindex}
    \Bessel\one_A(x)
            &=& \int_{\del_x^+A} d_x\, \dchifloor - \int_{\del_x^-A} d_x\, \dchiceil \\
            &=& \int_{\Crit_x\cap\del_x^+A} d_x\, \Index^*\, d\chi - \int_{\Crit_x\cap\del_x^-A} d_x\, \Index_*\, d\chi .
\end{eqnarray}
where $\Crit_x$ denotes the critical points of $d_x:\del A\to[0,\infty)$.
\end{theorem}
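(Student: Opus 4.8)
The plan is to upgrade the star-convex computation of Lemma~\ref{lem:cone} --- in which every boundary point is an ``exit'' and $\del A=\del_x^+A$ --- to the general case by bookkeeping the two orientation classes separately. As in that lemma, pass to the logarithmic blowup $V\setminus\{x\}\cong S^{n-1}\times\real^+$, under which $d_x$ becomes the $\real^+$-coordinate and the isospectral sphere $\del B_r(x)$ becomes $S^{n-1}\times\{r\}$; thus $A\cap\del B_r(x)$ is identified with the directional slice $A_r:=\{\theta\in S^{n-1}:x+r\theta\in A\}$. The goal is the pointwise-in-$r$ identity
\begin{equation}
\label{eq:pointwise}
    \chi\bigl(A\cap\del B_r(x)\bigr)
    =
    \chi\bigl(\del_x^+A\cap\{d_x\geq r\}\bigr)
    -
    \chi\bigl(\del_x^-A\cap\{d_x> r\}\bigr),
\end{equation}
after which integrating over $r\in[0,\infty)$ and applying the excursion formula \rmref{eq:eulerexcursion} of Theorem~\ref{thm:eulerintformulae} to the nonnegative integrand $d_x$ yields the first displayed equality: the $\geq$ slice over $\del_x^+A$ produces $\dchifloor$ and the $>$ slice over $\del_x^-A$ produces $\dchiceil$, the negative-excursion terms vanishing since $d_x\geq 0$. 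The second equality is then immediate from the Morse index formula, Theorem~\ref{thm:morseeuler}, applied separately on $\del_x^+A$ and $\del_x^-A$ and concentrated onto the critical set $\Crit_x$.

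To prove \rmref{eq:pointwise} I would use the radial projection $\pi:\del A\to S^{n-1}$ together with the Fubini (projection) formula for Euler integration, which computes $\chi(S)=\int_{S^{n-1}}\chi\bigl(\pi\inv(\theta)\cap S\bigr)\,d\chi(\theta)$ for any definable $S\subseteq\del A$. Applying this to $S=\del_x^+A\cap\{d_x\geq r\}$ and to $S=\del_x^-A\cap\{d_x>r\}$, and noting $\chi(A_r)=\int_{S^{n-1}}\one_{A_r}\,d\chi$, reduces \rmref{eq:pointwise} to the fiberwise statement that, for each direction $\theta$,
\begin{equation}
\label{eq:fiberwise}
    \one[x+r\theta\in A]
    =
    \chi\bigl(\pi\inv(\theta)\cap\del_x^+A\cap\{d_x\geq r\}\bigr)
    -
    \chi\bigl(\pi\inv(\theta)\cap\del_x^-A\cap\{d_x>r\}\bigr).
\end{equation}
For a direction whose ray meets $\del A$ transversally, the ray intersects the closed set $A$ in finitely many closed intervals whose left endpoints (entrances, where the outward normal points back toward $x$) lie in $\del_x^-A$ and whose right endpoints (exits, outward normal pointing away from $x$) lie in $\del_x^+A$. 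A direct count then verifies \rmref{eq:fiberwise}: the right-hand side equals (exits at radius $\geq r$) minus (entrances at radius $>r$), which telescopes to $\one[x+r\theta\in A]$. The asymmetry between the closed condition on $\del_x^+A$ and the open condition on $\del_x^-A$ is forced by $A$ being closed, and it is exactly this asymmetry that pairs $\dchifloor$ against $\dchiceil$; the same count also absorbs the center correctly when $x$ lies in the interior of $A$, where the innermost interval has no entrance point on $\del A$.

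The main obstacle is the verification of \rmref{eq:fiberwise} at the non-transverse directions --- those whose ray is tangent to, or contains a segment of, $\del A$, which are precisely the directions over the overlap locus $\del_x^+A\cap\del_x^-A$. These directions form a lower-dimensional subset of $S^{n-1}$, but since $d\chi$ genuinely weights lower-dimensional strata they cannot be discarded, so one must check that the fiberwise Euler characteristics on the right of \rmref{eq:fiberwise} still combine to the interior indicator there. I expect to handle this by a definable local triangulation adapted to $\pi$, chosen so that $d_x$ and the partition $\del A=\del_x^+A\cup\del_x^-A$ are simplicial on each cell, reducing the fiber count to the transverse bookkeeping stratum by stratum. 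The remaining routine points are the Lebesgue--Euler Fubini interchange used to integrate \rmref{eq:pointwise} over $r$ (justified by the triangulation theorem for $\Def$), and the edge case $x\in\del A$, where $d_x$ vanishes only on a set of measure zero in $r$ and so does not affect the integrals.
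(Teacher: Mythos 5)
Your argument is correct in substance but reaches the theorem by a genuinely different route. Both proofs pivot on the same pointwise-in-$r$ identity --- your equation $\chi(A\cap\del B_r(x))=\chi(\del_x^+A\cap\{d_x\geq r\})-\chi(\del_x^-A\cap\{d_x>r\})$ is precisely the paper's ``crucial observation,'' with $\del_x^\pm A$ standing in for the cone boundaries $\del C_x^\pm$ --- and both finish identically: integrate in $r$, apply the excursion formula of Theorem \ref{thm:eulerintformulae} (the negative-excursion terms vanishing since $d_x\geq 0$), then concentrate onto $\Crit_x$ via Theorem \ref{thm:morseeuler}. Where you differ is in how that identity is established. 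The paper writes $A$ as the closure of a difference of cones at $x$, invokes the star-convex Lemma \ref{lem:cone} on each, handles general $A$ by an asserted induction on cones, and afterwards discards the lateral cone faces because the only critical point of $d_x$ there is $x$ itself, where the integrand vanishes. You bypass cones entirely: you push forward along the radial projection $\pi:\del A\to S^{n-1}$ (legitimate here, since the paper's $\chi$ is the combinatorial, compactly-supported one for which the definable Fubini theorem holds) and verify a telescoping exits-minus-entrances count on each fiber. This buys uniformity --- no decomposition, no induction --- and makes transparent exactly why the closed condition $\{d_x\geq r\}$ pairs with $\dchifloor$ on $\del_x^+A$ while the open condition $\{d_x>r\}$ pairs with $\dchiceil$ on $\del_x^-A$, which is the heart of the theorem. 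The cost is shifted to the non-transverse fibers (tangent rays and rays containing radial segments of $\del A$, i.e.\ the directions over $\del_x^+A\cap\del_x^-A$), which you rightly refuse to discard as $d\chi$ weights lower-dimensional strata, but which you only sketch via an adapted triangulation; that residue is at the same level of informality as the paper's own ``the case of multiple cones follows by induction,'' and in concrete checks (a tangent ray touching $\del A$ at a point of the overlap locus, or a ray containing a radial edge of a polygon, where the half-open fiber piece has $\chi=0$) your fiberwise formula does return the interior indicator. One small correction: in the edge case $x\in\del A$, the genuine issue is that $\pi$ and the logarithmic blowup are undefined at $x$ itself, not that $d_x$ vanishes on an $r$-null set; since $x$ affects the fiber data only at $r=0$, this remains harmless.
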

\begin{proof}
Assume, for simplicity, that $A$ is the closure of the difference of $C^+_x$, the cone at $x$ over $A^+_x$, and $C^-_x$, the cone over $A^-_x$. (The case of multiple cones follow by induction.) These cones, being star-convex with respect to $x$, admit analysis as per Lemma \ref{lem:cone}. The crucial observation is that, by additivity of $\chi$,
\[
        \chi(\del B_r(x) \cap A) = \chi(\del C^+_x\cap\{d_x\geq r\}) - \chi(\del C^-_x\cap\{d_x>r\}) .
\]
Integrating both sides with respect to $dr$ and invoking Theorem \ref{thm:eulerintformulae} gives
\[
        \Bessel\one_A(x) = \int_{\del C^+_x}d_x\,\dchifloor - \int_{\del C^-_x}d_x\,\dchiceil .
\]
By Theorem \ref{thm:morseeuler}, this reduces to an integral over the critical sets of $d_x$. The only critical point of $d_x$ on $C^+_x-\del A$ or $C^-_x-\del A$ is $x$ itself, on which the integrand $d_x$ takes the value $0$ and does not contribute to the integral. Therefore the integrals over the cone boundaries may be restricted to $\del^+A$ and $\del^-A$ respectively. The index-theoretic result follows from Theorem \ref{thm:morseeuler}.
\end{proof}

In even dimensions, the $\dchifloor$-vs-$\dchiceil$ dichotomy dissolves:

\begin{corollary}
\label{cor:evendim}
For $\dim V$ even and $A$ the closure of a bounded definable open set,
\begin{equation}
\label{eq:evendim}
    \Bessel\one_A(x) = \int_{\del A} d_x\, \dchifloor  = \int_{\Crit_x} d_x\,\Index_*\,d\chi .
\end{equation}
\end{corollary}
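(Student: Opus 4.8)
The plan is to derive the even-dimensional collapse directly from the split index formula of Theorem \ref{thm:besseldchi}, the only genuinely new ingredient being a parity identity relating the co-index $\Index^*$ and the index $\Index_*$ of $d_x$ on the boundary. First I would invoke Theorem \ref{thm:besseldchi} to write $\Bessel\one_A(x) = \int_{\del_x^+A} d_x\,\dchifloor - \int_{\del_x^-A} d_x\,\dchiceil$, so that everything reduces to comparing the $\dchifloor$ and $\dchiceil$ integrals of $d_x$ over the two boundary pieces.

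The key step is that $\del A$ is a hypersurface, hence a manifold of dimension $\dim V - 1$, one less than the ambient space. For a nondegenerate critical point $p$ of $d_x|_{\del A}$ of Morse index $\mu(p)$, the formulae recorded just before Equation (\ref{eq:morse}) give $\Index^*(p) = (-1)^{(\dim V - 1) - \mu(p)}$ and $\Index_*(p) = (-1)^{\mu(p)}$, so that $\Index^*(p) = (-1)^{\dim V - 1}\Index_*(p)$. When $\dim V$ is even the exponent $\dim V - 1$ is odd, and the two local indices differ precisely by a sign: $\Index^*(p) = -\Index_*(p)$. Through Theorem \ref{thm:morseeuler} this upgrades to the integral identity $\int_{\del_x^-A} d_x\,\dchifloor = -\int_{\del_x^-A} d_x\,\dchiceil$ on the negatively oriented boundary piece.

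With this identity the minus sign in Theorem \ref{thm:besseldchi} is absorbed: $-\int_{\del_x^-A} d_x\,\dchiceil = \int_{\del_x^-A} d_x\,\dchifloor$, and the two boundary integrals merge, by additivity, into the single expression $\int_{\del_x^+A} d_x\,\dchifloor + \int_{\del_x^-A} d_x\,\dchifloor = \int_{\del A} d_x\,\dchifloor$. A second application of Theorem \ref{thm:morseeuler} then concentrates this measure onto $\Crit_x$, yielding the critical-point form and completing the chain of equalities in (\ref{eq:evendim}). I would track the sign relation between $\Index^*$ and $\Index_*$ with care here, since in the even-dimensional case the boundary manifold is odd-dimensional and it is the co-index $\Index^*$ that survives the collapse.

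The main obstacle I anticipate is that the clean Morse-index parity relation is guaranteed only at nondegenerate critical points; at degenerate (non-Morse) critical points of $d_x$ the identity $\Index^* = (-1)^{\dim V - 1}\Index_*$ can fail, as a one-dimensional inflection-type example $h(y)=y^3$ already shows. To handle this I would argue by genericity: for $x$ outside the (lower-dimensional, definable) locus of focal points of $\del A$, the distance function $d_x|_{\del A}$ is Morse, so the argument above applies verbatim; and since both $\Bessel\one_A$ and $x \mapsto \int_{\del A} d_x\,\dchifloor$ are continuous in $x$ (as asserted for these transforms), the equality on this dense set extends to all $x \in V$ by continuity. Establishing this continuity and the genericity of the Morse condition for definable hypersurfaces is the part that would require the most care.
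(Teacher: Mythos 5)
Your overall strategy---absorbing the minus sign in Theorem \ref{thm:besseldchi} via a parity relation between $\Index^*$ and $\Index_*$ on the odd-dimensional boundary---is exactly the mechanism behind the paper's proof. But the paper gets the parity relation for free by citing Equation {\bf [18]} of \cite{BG:PNAS}: on an odd-dimensional manifold, $\int h\,\dchiceil = -\int h\,\dchifloor$ for \emph{every} continuous definable $h$. That duality is a pointwise, local-homology statement (it holds at every point of a topological manifold, degenerate or not), so the paper's proof is one line and needs no nondegeneracy hypothesis. You instead derive the sign flip only from the Morse-index formulae preceding Equation (\ref{eq:morse}), which, as you correctly note, are valid only at nondegenerate critical points, and then you patch the degenerate case with a genericity-plus-continuity argument. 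That patch is where the genuine gap lies.

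Concretely, two steps of the patch fail under the corollary's actual hypotheses. First, $A$ is merely the closure of a bounded definable open set, so $\del A$ need not be smooth; for polygonal targets---the paper's principal examples---$d_x\vert_{\del A}$ is not a classical Morse function for \emph{any} $x$, since the corner points are non-smooth critical points for every basepoint. Your ``dense set of $x$ outside the focal locus'' is then empty, and there is nothing to extend by continuity. (A stratified-Morse version of genericity exists, but at stratified critical points the index is no longer $(-1)^{\mu}$, and the pointwise identity $\Index^*=-\Index_*$ you would need there is precisely the content of the cited duality---so the detour through Morse theory buys nothing.) Second, your continuity claim for the right-hand side $x\mapsto\int_{\del A} d_x\,\dchifloor$ is not asserted anywhere in the paper (the abstract's continuity claim concerns the transforms themselves), and it cannot be taken on faith: the functionals $\int\cdot\,\dchifloor$ are nonlinear and have poor continuity properties under perturbation of the integrand, so this step would itself require a careful o-minimal argument (e.g., via Hardt triviality of the family $\{d_x\}_x$). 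The clean repair is to replace the entire genericity scaffold with the pointwise duality $\Index^*(p)=(-1)^{\dim}\Index_*(p)$ on manifolds from \cite{BG:PNAS}, which holds at corners and degenerate critical points alike---after which your computation flipping $-\int_{\del_x^-A} d_x\,\dchiceil$ into $+\int_{\del_x^-A} d_x\,\dchifloor$ and merging the two boundary pieces goes through as you wrote it.
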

\begin{proof}
For $\dim V$ even, $\dim\del A$ is odd. Equation {\bf [18]} of \cite{BG:PNAS} implies that on an odd-dimensional manifold, $\int \dchiceil=-\int \dchifloor$. Equation (\ref{eq:besselindex}) completes the proof.
\end{proof}


Given the index theorem for the Euler-Bessel transform, that for the Euler-Fourier is a trivial modification that generalizes Example \ref{ex:fourierwidth}.

\begin{theorem}
\label{thm:fourierdchi}
For $A\subset V$ the closure of a definable bounded open subset of $V$ and $\xi\in V^*-\{0\}$, decompose $\del A$ into $\del A=\del_x^+A\cup\del_x^-A$, where $\del_x^\pm A$ are the (closure of) subsets of $\del A$ on which $\xi$ points out of ($\del^+$) or into ($\del^-$) $A$. Then,
\begin{eqnarray}
\label{eq:fourierindex}
    \Fourier\one_A(\xi)
            &=& \int_{\del_{\xi}^+A} \xi\, \dchifloor - \int_{\del_{\xi}^-A} \xi\, \dchiceil \\
            &=& \int_{\Crit_\xi\cap\del_{\xi}^+A} \xi\, \Index^*\, d\chi - \int_{\Crit_\xi\cap\del_\xi^-A} \xi\, \Index_*\, d\chi .
\end{eqnarray}
where $\Crit_\xi$ denotes the critical points of $\xi:\del A\to[0,\infty)$. For $\dim V$ even, this becomes:
\begin{equation}
    \Fourier\one_A(\xi) = \int_{\del A} \xi\, \dchifloor  = \int_{\Crit_\xi} \xi\,\Index_*\,d\chi .
\end{equation}
\end{theorem}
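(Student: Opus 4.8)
The plan is to run the argument of Theorem \ref{thm:besseldchi} essentially verbatim, substituting the linear functional $\xi$ for the distance function $d_x$, the affine hyperplanes $\xi\inv(r)$ for the concentric spheres $\del B_r(x)$, and ``$\xi$-columns'' (regions swept out by segments parallel to $\nabla\xi$) for the star-convex cones $C^\pm_x$. The one genuine simplification is that no logarithmic blowup is required: whereas $d_x$ is singular at its center $x$, the functional $\xi$ is linear, hence globally defined and smooth on all of $V$ with no singular center, so its level hyperplanes foliate $V$ directly and the analogue of Lemma \ref{lem:cone} may be read off without passing to $S^{n-1}\times\real^+$.

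First I would record the columnar analogue of Lemma \ref{lem:cone}. For a region $P$ bounded below by $\xi\inv(0)$ and above by a piece of $\del A$ on which $\xi$ is single-valued over $\xi\inv(0)$ (so that each line parallel to $\nabla\xi$ meets $P$ in a single interval), the fibre $P\cap\xi\inv(r)$ is homeomorphic to $\del A\cap\{\xi\geq r\}$. Starting from $\Fourier\one_P(\xi)=\int_0^\infty\chi(P\cap\xi\inv(r))\,dr$ and applying the excursion formula \rmref{eq:eulerexcursion} then yields the columnar identity $\Fourier\one_P(\xi)=\int_{\del A}\xi\,\dchifloor$, the exact analogue of Lemma \ref{lem:cone}. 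Because $\xi$ may be negative, both branches $\chi\{\xi\geq s\}$ and $\chi\{\xi<-s\}$ of \rmref{eq:eulerexcursion} are active; this is handled automatically by invoking Theorem \ref{thm:eulerintformulae} rather than by hand.

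Next I would treat a general $A$ by writing it, up to closure, as a difference $P^+ - P^-$ of two such $\xi$-columns erected between $\xi\inv(0)$ and the boundary pieces $\del_\xi^+A$, $\del_\xi^-A$, precisely as $A=\overline{C^+_x - C^-_x}$ in the Bessel proof; the multi-column case follows by induction. Additivity of $\chi$ gives $\chi(\xi\inv(r)\cap A)=\chi(\del P^+\cap\{\xi\geq r\})-\chi(\del P^-\cap\{\xi>r\})$, and integrating in $dr$ produces the floor term over $\del_\xi^+A$ and the ceiling term over $\del_\xi^-A$ of \rmref{eq:fourierindex} --- the orientation of the outward halfspace relative to $\xi$ being exactly what selects $\dchifloor$ versus $\dchiceil$. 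Theorem \ref{thm:morseeuler} then concentrates each integral onto $\Crit_\xi$, giving the index-theoretic second line; the spurious critical locus on the base $\xi\inv(0)$ carries $\xi=0$ and contributes nothing, exactly as the apex $x$ did for Bessel. The even-dimensional collapse is then immediate from the odd-dimensionality of $\del A$ and the identity $\int\dchiceil=-\int\dchifloor$, reproducing the reasoning of Corollary \ref{cor:evendim}.

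The main obstacle I anticipate is bookkeeping rather than conceptual: decomposing an arbitrary bounded definable $A$ into finitely many $\xi$-columns (guaranteed by the definable triangulation theorem) while verifying that the artificial faces introduced where adjacent columns are glued cancel in the alternating sum, and that the sign conventions on $\del_\xi^\pm A$ match the floor/ceiling assignment. As in the Bessel case, the possibility that a line parallel to $\nabla\xi$ meets $\del A$ several times is absorbed into the inductive $P^+-P^-$ splitting, so once that splitting and the vanishing of the base contribution are in hand, the computation is a direct transcription of Theorem \ref{thm:besseldchi}.
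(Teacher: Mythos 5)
Your proposal is essentially the paper's own argument: the paper offers no independent proof of Theorem \ref{thm:fourierdchi}, saying only that it ``follows that of Theorem \ref{thm:besseldchi} and is an exercise,'' and your $\xi$-columns over the base hyperplane $\xi\inv(0)$ are the exact analogue of the cones $C^\pm_x$, with the base face playing the role of the apex $x$ (entirely critical, but carrying integrand $\xi=0$), the additivity identity $\chi(\xi\inv(r)\cap A)=\chi(\del P^+\cap\{\xi\geq r\})-\chi(\del P^-\cap\{\xi>r\})$ producing the floor/ceiling split, and Theorem \ref{thm:morseeuler} plus the odd-dimensionality of $\del A$ handling the concentration and the even-dimensional collapse.

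One correction, however: your claim that when $\xi$ is negative ``both branches $\chi\{\xi\geq s\}$ and $\chi\{\xi<-s\}$ of \rmref{eq:eulerexcursion} are active'' and that this is ``handled automatically'' is false. The transform $\Fourier\one_A(\xi)=\int_0^\infty\chi(A\cap\xi\inv(r))\,dr$ integrates only over $r\in[0,\infty)$, so negative levels of $\xi$ are invisible to it, whereas the excursion formula for $\dchifloor$ \emph{subtracts} negative excursions; if $A$ meets $\{\xi<0\}$ the identity genuinely fails. For example, with $A$ the unit disk in $\real^2$ centered at the origin and $\norm{\xi}=1$, one has $\Fourier\one_A(\xi)=1$, while $\int_{\del A}\xi\,\dchifloor=\int_0^1\left[\chi\{\xi\geq s\}-\chi\{\xi<-s\}\right]ds=\int_0^1[1-(-1)]\,ds=2$. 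The theorem carries the implicit hypothesis that $\xi\geq 0$ on $\del A$ (note the codomain $[0,\infty)$ in its statement), and your own construction silently uses it: columns erected between $\xi\inv(0)$ and $\del A$ only represent $A$ when $A$ lies in the nonnegative halfspace. Under that standing hypothesis the negative branch is vacuous and the rest of your transcription goes through verbatim.
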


The proof follows that of Theorem \ref{thm:besseldchi} and is an exercise. Figure \ref{fig:index} gives a simple example of the Bessel and Fourier index theorems in $\real^2$.

\begin{figure}
\begin{center}
\psfragscanon
\psfrag{-}[][]{$-$}
\psfrag{+}[][]{$+$}
\psfrag{x}[][]{$x$}
\psfrag{X}[][]{$\xi$}
\psfrag{A}[][]{$A$}
\psfrag{p}[][]{$\del_x^+A$}
\psfrag{m}[][]{$\del_x^-A$}
\psfrag{P}[][]{$\del_\xi^+A$}
\psfrag{M}[][]{$\del_\xi^-A$}
\includegraphics[width=5in]{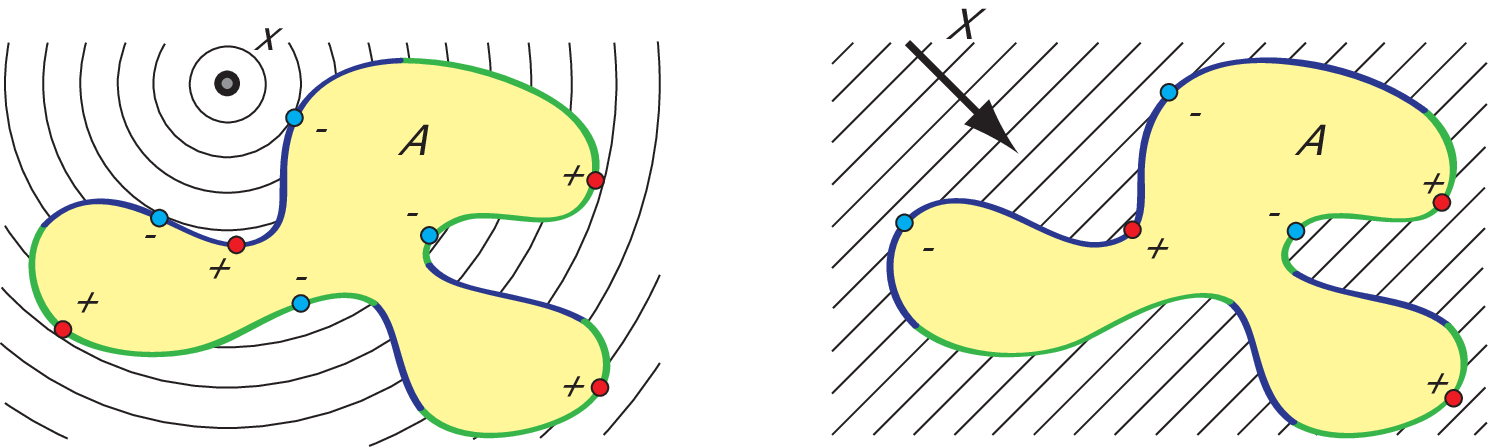}
\caption{The index formula for $\Bessel$ [left] and $\Fourier$ [right] applied to $\one_A$ localizes the transform to (topological or smooth) tangencies of the isospectral sets with $\del A$.}
\label{fig:index}
\end{center}
\end{figure}

By linearity of $\Bessel$ and $\Fourier$ over $\CF(V)$, one derives index formulae for integrands in $\CF(V)$ expressible as a linear combination of $\one_{A_i}$ for $A_i$ the closure of definable bounded open sets. For a set $A$ which is not of dimension $\dim V$, it is still possible to apply the index formula by means of a limiting process on compact tubular neighborhoods of $A$.


The remainder of this paper explores applications of the Euler-Bessel transform to signal processing problems involving target detection, localization, and discrimination.

\section{Application: Target localization}
\label{sec_localization}

Applications of Euler-type integral transforms are naturally made in the context of (reasonably dense) sensor networks. As in \cite{BG}, we consider the setting in which a finite number of targets reside in a field of sensors whose locations are parameterized by a vector space $V$. Each target has a corresponding \style{support} --- the subset of $V$ on which a sensor senses the target, albeit without information of the target's range, bearing, or identity. The resulting sensor counting function $h\in\CF(V)$ contains highly redundant but informative structure. The Euler integral can be used to extract from $h$ the number of targets \cite{BG}; the Euler-Bessel transform can be used to localize the targets in the case of convex target supports. Assume for example that all target supports are equal to a round ball: all sensors within some fixed distance of the target detect it. The Bessel transform of the sensor field reveals the exact location of the target.

\begin{proposition}
\label{prop:monotone}
For $A=B_R(p)$ a compact ball about $p\in\real^{2n}$, the Bessel transform $\Bessel\one_A$ is a nondecreasing function of the distance to $p$, having unique zero at $p$.
\end{proposition}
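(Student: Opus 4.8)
The plan is to evaluate $\Bessel\one_A(x)$ in closed form directly from the definition, exploiting the rotational symmetry of the round ball (I take $\norm{\cdot}$ to be Euclidean, so that the isospectral sets are round spheres). Set $t=\norm{x-p}$. Since the relative configuration of the isospectral sphere $\del B_r(x)$ and of $A=B_R(p)$ depends only on the pair $(r,t)$, the transform is a function of $t$ alone, say $f(t)$, and by definition $f(t)=\int_0^\infty\chi(A\cap\del B_r(x))\,dr$. Everything thus reduces to identifying the integrand $\chi(A\cap\del B_r(x))$ as $r$ ranges over $[0,\infty)$.

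First I would classify $A\cap\del B_r(x)$ according to the range $[\,\abs{r-t},\,r+t\,]$ of distances to $p$ attained on the sphere $\del B_r(x)$. If $r+t\le R$ the whole sphere lies in $A$, so $A\cap\del B_r(x)\cong S^{2n-1}$ and $\chi=0$, because $\dim V=2n$ is even and an odd-dimensional sphere has vanishing Euler characteristic. If $\abs{r-t}>R$ the sphere misses $A$ and $\chi=0$. In the intermediate (partial-overlap) range the intersection is the cap $\{y\in\del B_r(x):\norm{y-p}\le R\}$, which is contractible — a topological disk $D^{2n-1}$ — so $\chi=1$. Establishing this last value is the main obstacle: I must check that $\norm{\cdot-p}$ restricted to $\del B_r(x)$ is a single-minimum height function (true by symmetry about the line through $x$ and $p$), so that its sublevel sets are single caps rather than more complicated regions, and I must pin down the breakpoints in $r$, taking care at the tangencies $r=R-t,\,R+t$ (for $t\le R$) and $r=t-R,\,t+R$ (for $t>R$), where the intersection degenerates to a single point of measure zero.

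Integrating the resulting integrand — equal to $1$ on the overlap interval and $0$ otherwise — then finishes the computation. For $t\le R$ the overlap interval is $(R-t,R+t)$, giving $f(t)=2t$; for $t>R$ it is $(t-R,t+R)$, giving $f(t)=2R$. Hence $\Bessel\one_A(x)=2\min(\norm{x-p},R)$, which is nondecreasing in $\norm{x-p}$ (strictly increasing on $[0,R]$, constant beyond), and vanishes precisely when $\norm{x-p}=0$. This is exactly the asserted monotonicity together with the unique zero at $p$.

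As a consistency check I would re-derive $f$ from the index formula of Corollary \ref{cor:evendim}. For $x\neq p$, symmetry makes $d_x$ on $\del A\cong S^{2n-1}$ a topological Morse function with exactly two critical points — a minimum (the nearest point) at $d_x=\abs{R-t}$, of index $0$, and a maximum (the farthest point) at $d_x=R+t$, of index $2n-1$ — so the transform concentrates on these two tangencies of the isospectral spheres with $\del A$, as in Figure \ref{fig:index}. Weighting each critical value by the sign its index assigns yields $(R+t)-\abs{R-t}=2\min(t,R)=f(t)$, in agreement with the direct computation.
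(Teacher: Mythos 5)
Your proposal is correct, and its primary argument takes a genuinely different route from the paper. The paper's proof is a two-line application of the machinery already built: by convexity of $A$ and Corollary \ref{cor:evendim}, $\Bessel\one_A(x)=\int_{\del A}d_x\,\dchifloor=\max_{\del A}d_x-\min_{\del A}d_x$, which is $2R=\diam A$ off the ball and monotone in $\norm{x-p}$ inside it --- essentially your ``consistency check,'' run as the whole proof. You instead compute straight from the definition, classifying $\chi(A\cap\del B_r(x))$ by the overlap regime: $0$ when the isospectral sphere lies inside $A$ (an odd-dimensional sphere, $\chi(S^{2n-1})=0$) or misses it, and $1$ on the partial-overlap interval, where the law-of-cosines monotonicity of $\norm{\cdot-p}$ in the polar angle guarantees the intersection is a single cap $D^{2n-1}$; tangencies are $r$-measure zero. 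What your route buys: the explicit closed form $\Bessel\one_{B_R(p)}(x)=2\min(\norm{x-p},R)$, which is sharper than the qualitative statement (strict monotonicity on $[0,R]$, exact plateau value), and a transparent account of why even dimension is essential --- in $\real^{2n+1}$ the interior spheres contribute $\chi=2$, making the transform constant $2R$, exactly the failure the paper remarks on after the proposition. What the paper's route buys: brevity, and an argument that extends verbatim to arbitrary compact convex $A$ rather than round balls, since it never needs the rotational symmetry your cap analysis uses. One small point of care in your cross-check: since you pair the critical values with $\dchifloor$ on the odd-dimensional sphere $\del A$, the relevant weight is the co-index $\Index^*(p)=(-1)^{\dim\del A-\mu(p)}$, giving $+1$ at the maximum and $-1$ at the minimum; your arithmetic $(R+t)-\abs{R-t}=2\min(t,R)$ is consistent with this, but the phrase ``the sign its index assigns'' should be pinned to that convention to avoid an apparent sign flip against the $\Index_*$ weighting.
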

\begin{proof}
Convexity of balls and Corollary \ref{cor:evendim} implies that
\[
        \Bessel\one_A(x) = \int_{\del A} d_x\,\dchifloor = \max_{\del A}d_x - \min_{\del A}d_x ,
\]
which equals $\diam A=2R$ for $x\not\in A$ and is monotone in distance-to-$p$ within $A$.
\end{proof}

Note that Proposition \ref{prop:monotone} fails in odd dimensions; the Bessel transform of a ball in $\real^{2n+1}$ is constant, and $\Bessel$ obscures all information. However, for even dimensions, Proposition \ref{prop:monotone} provides a basis for target localization. For targets with convex supports (regions detected by counting sensors), the local minima of the Euler-Bessel transform can reveal target locations: see Fig. \ref{fig:besselloc}[left] for an example. Note that in this example, not all local minima are target centers: interference creates ghost minima. However, given $h\in\CF(V)$, the integral $int_Vh\,d\chi$ determines the number of targets. This provides a guide as to how many of the deepest local minima to interrogate.

\begin{figure}[hbt]
\begin{center}
\includegraphics[width=2.0in,height=2.0in]{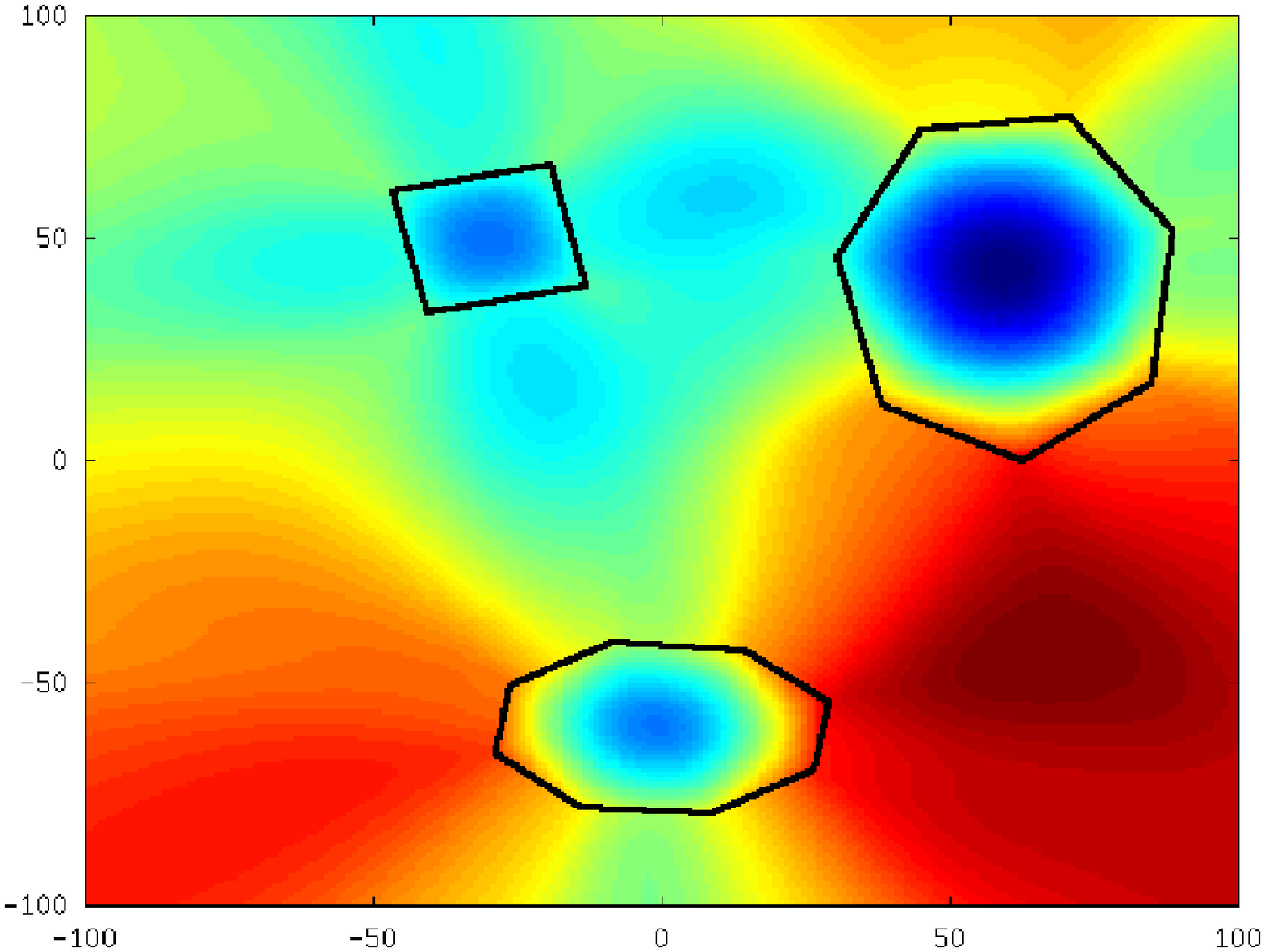}
\includegraphics[width=2.0in,height=2.0in]{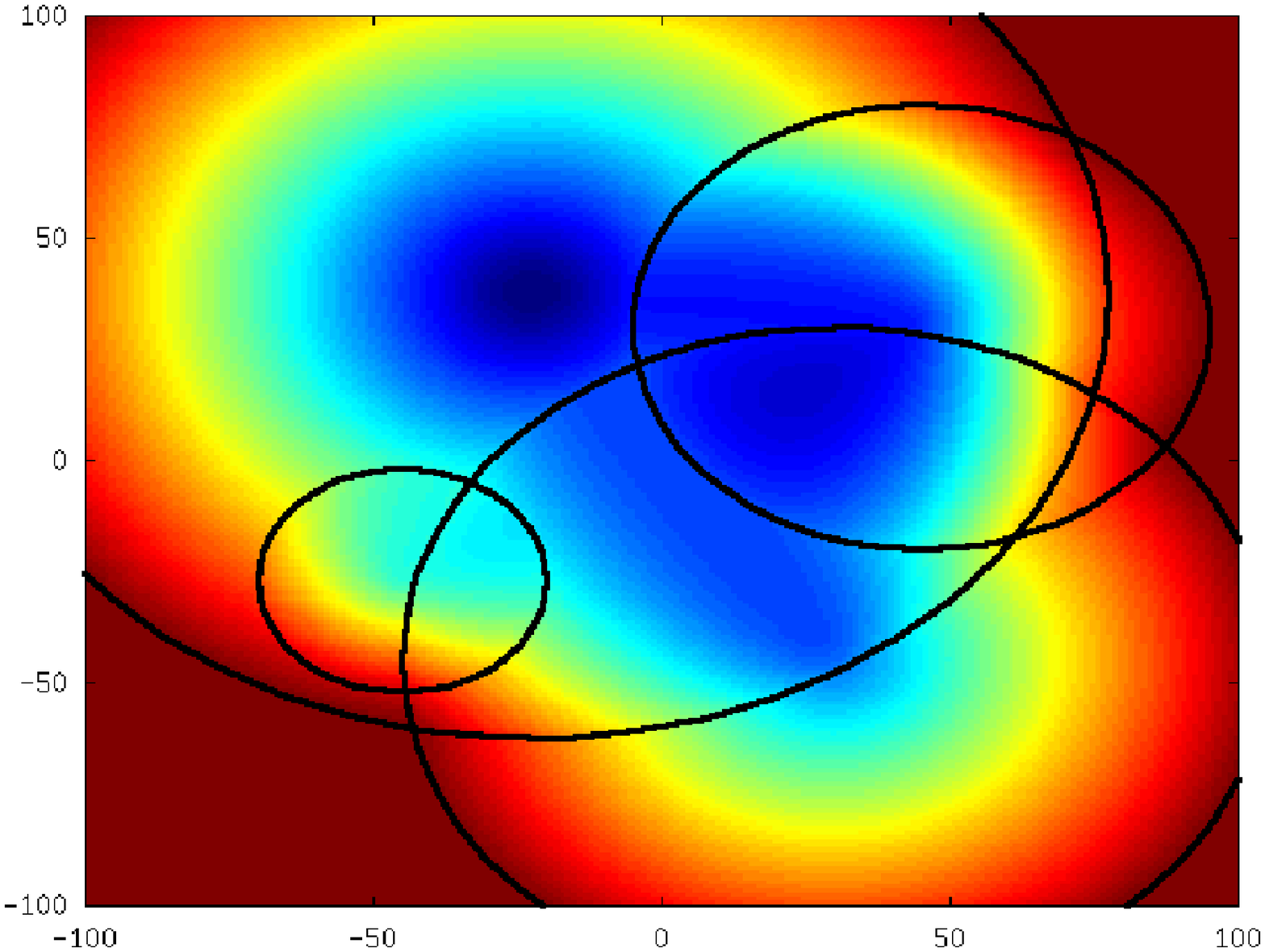}
\caption{The Euler-Bessel transform of a collection of convex targets [left] has local minima at the target centers. However, too much interference between targets obscures target centers [right].}
\label{fig:besselloc}
\end{center}
\end{figure}

There are significant limitations to superposition by linearity for this application. When targets are nearby or overlapping, their individual transforms will have overlapping sidelobes, which results in uncertainty when the transform is being used for localization.  A typical example of this difficulty (present even in the setting of round targets) is shown in Figure \ref{fig:besselloc}[right].

\section{Spatially variant apodization and sidelobes}
\label{sec_lobe}

Figure \ref{fig:besselloc} reveals the prevalence of \style{sidelobes} in the application of the Euler-Bessel transform --- regions of ``energy leakage'' in the transform --- much the same as occurs in Lebesgue-theoretic integral transforms. These are, in general, disruptive, especially in the context of several targets, where multiple side lobe interferences can create ``ghost'' images. Such problems are prevalent in traditional radar image processing, and their wealth of available perspectives in mitigating unwanted sidelobes provides a base of intuition for dealing with the present context.

One obvious method for modifying the output of Euler-Bessel transform is to change the isospectral contours of integration , as regulated by the norm $\norm{\cdot}$.  For example, one can contrast the circular (or $\ell_2$) Euler-Bessel transform with its square (or $\ell_\infty$) variant. The resulting outputs of norm-varied transforms can have incisive characteristics in some circumstances: see \S\ref{sec_waveforms}. However, it may the case that no single norm is optimal for a given input, especially if it consists of multiple targets with different characteristics.

%
%
%

We propose the adaptation and refinement of one tool of widespread use in traditional radar processing. This usually goes under the name of SVA: \style{spatially variant apodization} (see \cite{SDF} for the original implementation; an updated discussion is in \cite{Carrara}). Though there are many heuristic implementations of SVA, the core concept behind the method involves using a parameterized family of kernels and optimizing the transform pointwise with respect to this family. The family of kernels is designed so as to reduce as much as possible the magnitude of the sidelobe phenomena, while preserving as much as possible the primary lobe.

For the setting of the Euler-Bessel transform, we propose the following. Consider a parameterized family $\Family$ of norms $\norm{\cdot}_\alpha$, $\alpha\in\Family$. The SVA Euler-Bessel transform is the pointwise infimum of transforms over $\Family$.
\begin{equation}
\left(\Bessel_{SV\!A} h \right) (x)=\inf_{\alpha\in\Family} \int_0^\infty \int_{B_{r,\alpha}(x)} h\, d\chi \, dr,
\end{equation}
where $B_{r,\alpha}(x)$ is the radius $r$ ball about $x$ in the $\alpha$-norm. For example, $\Family$ could describe a cyclic family of rotated $\ell_\infty$ norms.  As shown in Figure \ref{sva_rsq_two_fig}, SVA with this rotated family eliminates the sidelobes from the transform of a square rotated by an unknown amount, while preserving the response of another nearby target.

\begin{figure}
\begin{center}
\includegraphics[width=2.0in,height=2.0in]{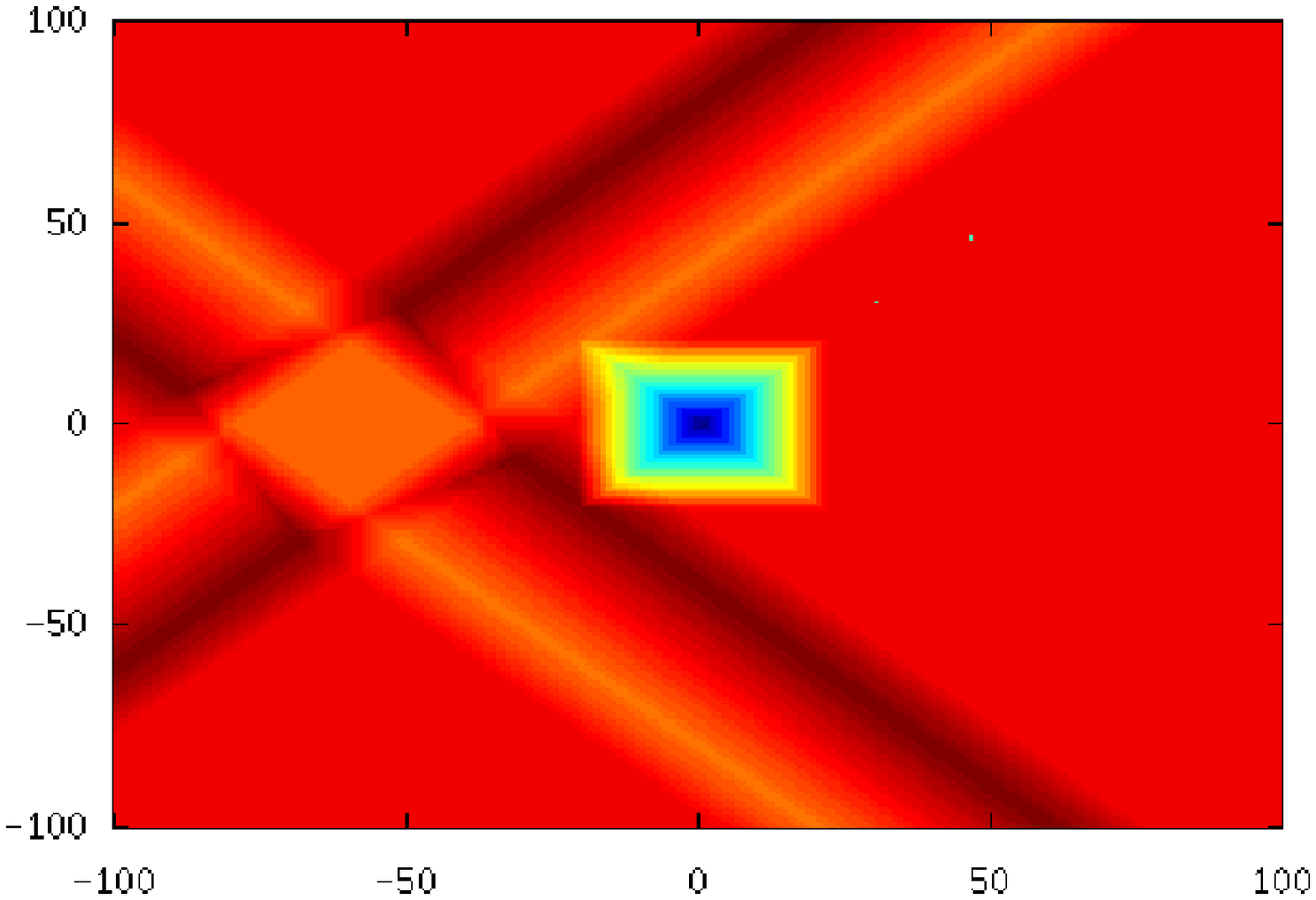}
\includegraphics[width=2.0in,height=2.0in]{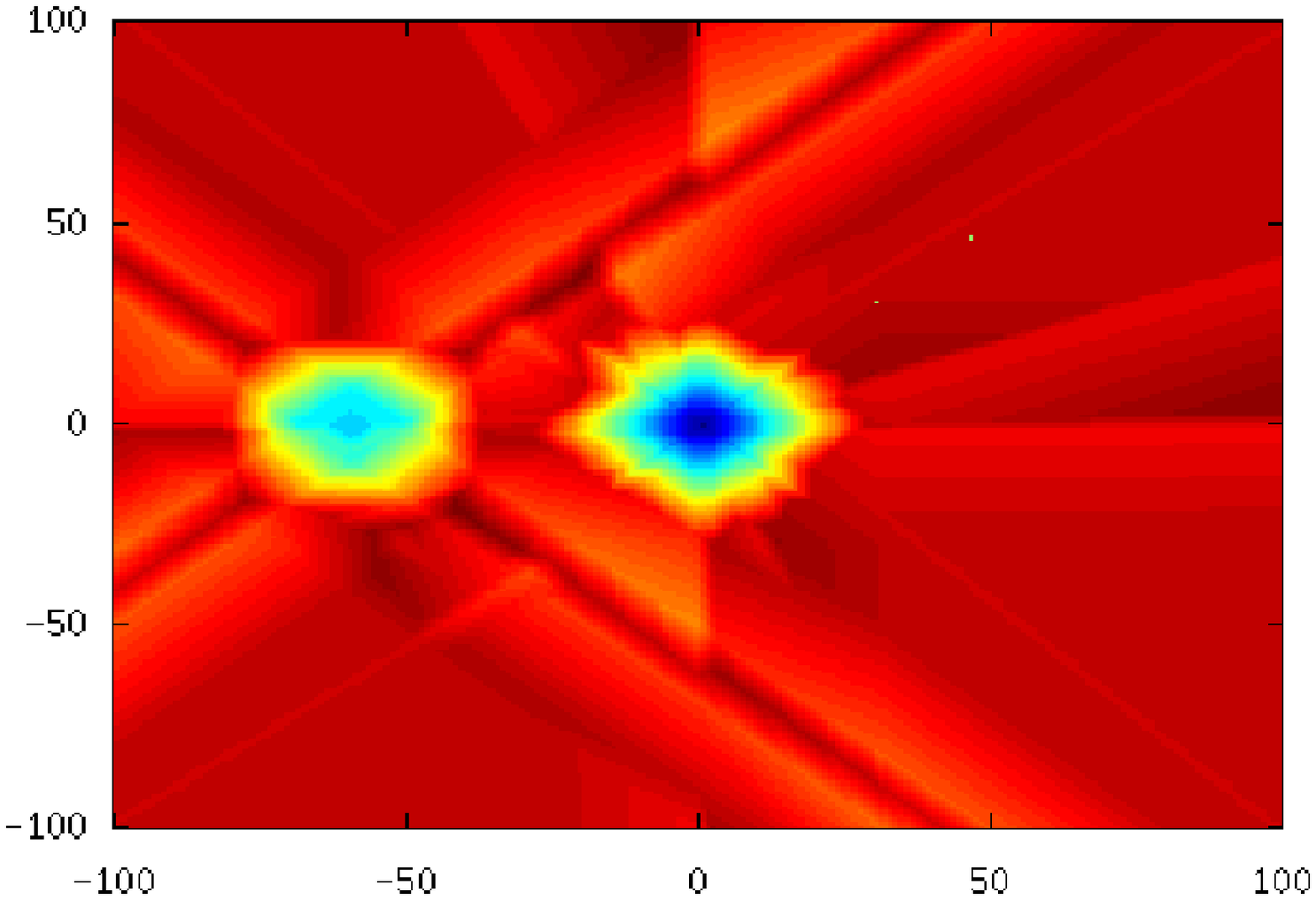}
\caption{Before (left) and after (right) application of SVA to the $\ell_\infty$ Euler-Bessel transform of a pair of disjoint square targets.}
\label{sva_rsq_two_fig}
\end{center}
\end{figure}

As in the case of traditional radar processing, there is benefit to SVA implementation even when the transform contour is not similar to the expected target shape.  For instance, $\ell_\infty$ contours result in strong sidelobes in the transform of a hexagon (Figure \ref{sva_hex_fig}[left]).  Application of rotated SVA (Figure \ref{sva_hex_fig}[right]) does not eliminate the sidelobes, but does dramatically reduce their magnitude.

\begin{figure}
\begin{center}
\includegraphics[width=2.0in,height=2.0in]{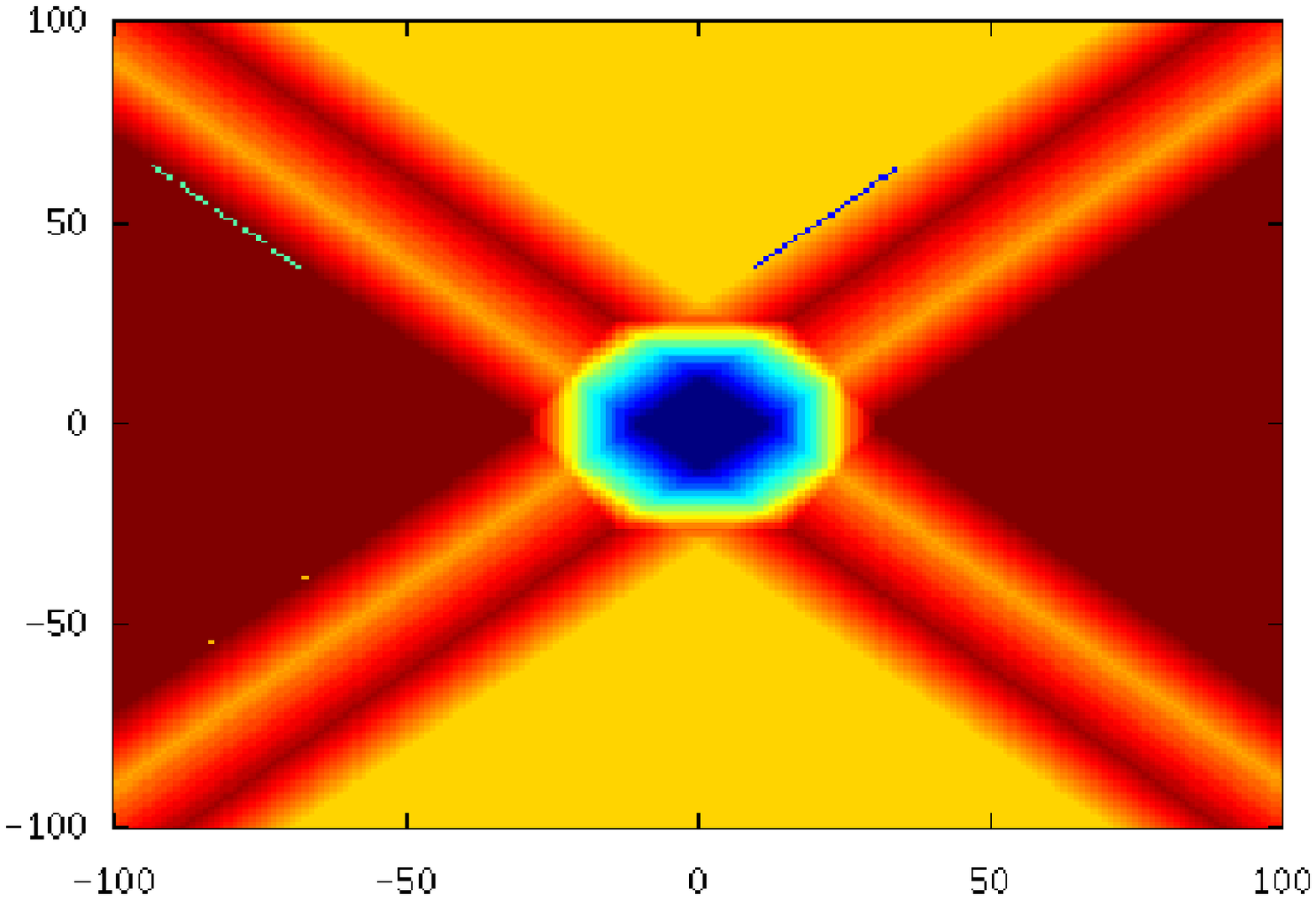}
\includegraphics[width=2.0in,height=2.0in]{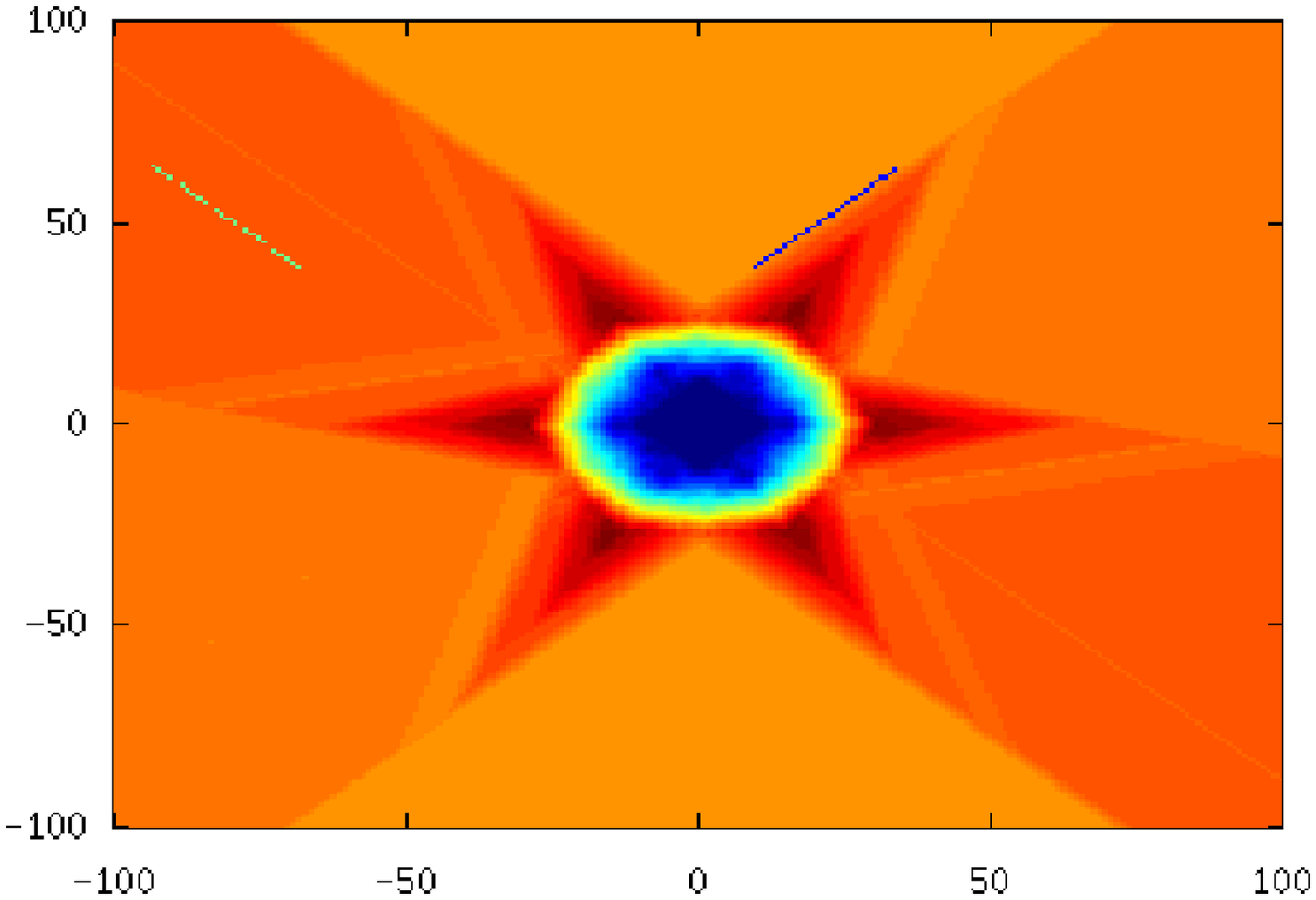}
\caption{Before (left) and after (right) application of SVA to the $\ell_\infty$ Euler-Bessel transform of a hexagonal target support. Use of SVA greatly mitigates sidelobes.}
\label{sva_hex_fig}
\end{center}
\end{figure}



%

\section{Application: Waveforms and shape discrimination}
\label{sec_waveforms}

Motivated by matched-filter processing, we apply Euler-Bessel transforms to perform geometric discrimination from enumerative data.  Since the Euler-Bessel transform of a target has minimal sidelobes when the isospectral contours agree with the target shape, it can be used to construct a \style{shape filter}. To test this, a simulation was run on a function $h=\sum_i\one_{A_i}$ for $A_i$ a collection of polygonal domains in $\real^2$.  The results are shown in Figures \ref{fig:shapefinder1} through \ref{fig:shapefinder4}. In these, $A_1$ (upper left) is a hexagon; $A_2$ (upper right) is a rotated square; $A_3$ (lower left) is a round disk; and $A_4$ (lower right) is an axis-aligned square. The sizes of these support regions were varied, to the point of significant overlap and interference.

\begin{figure}
\begin{center}
\includegraphics[width=1.8in,height=1.8in]{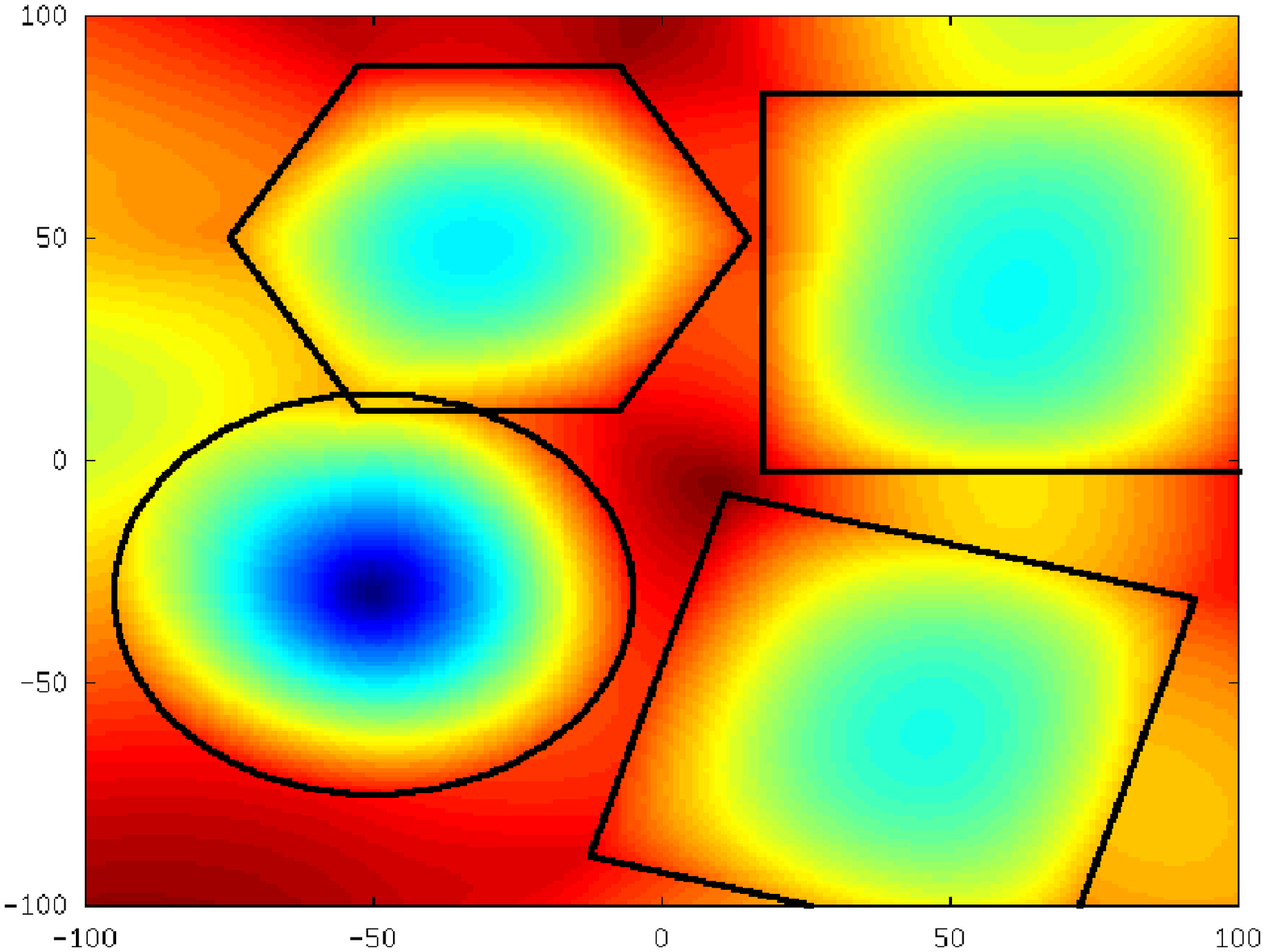}
\includegraphics[width=1.8in,height=1.8in]{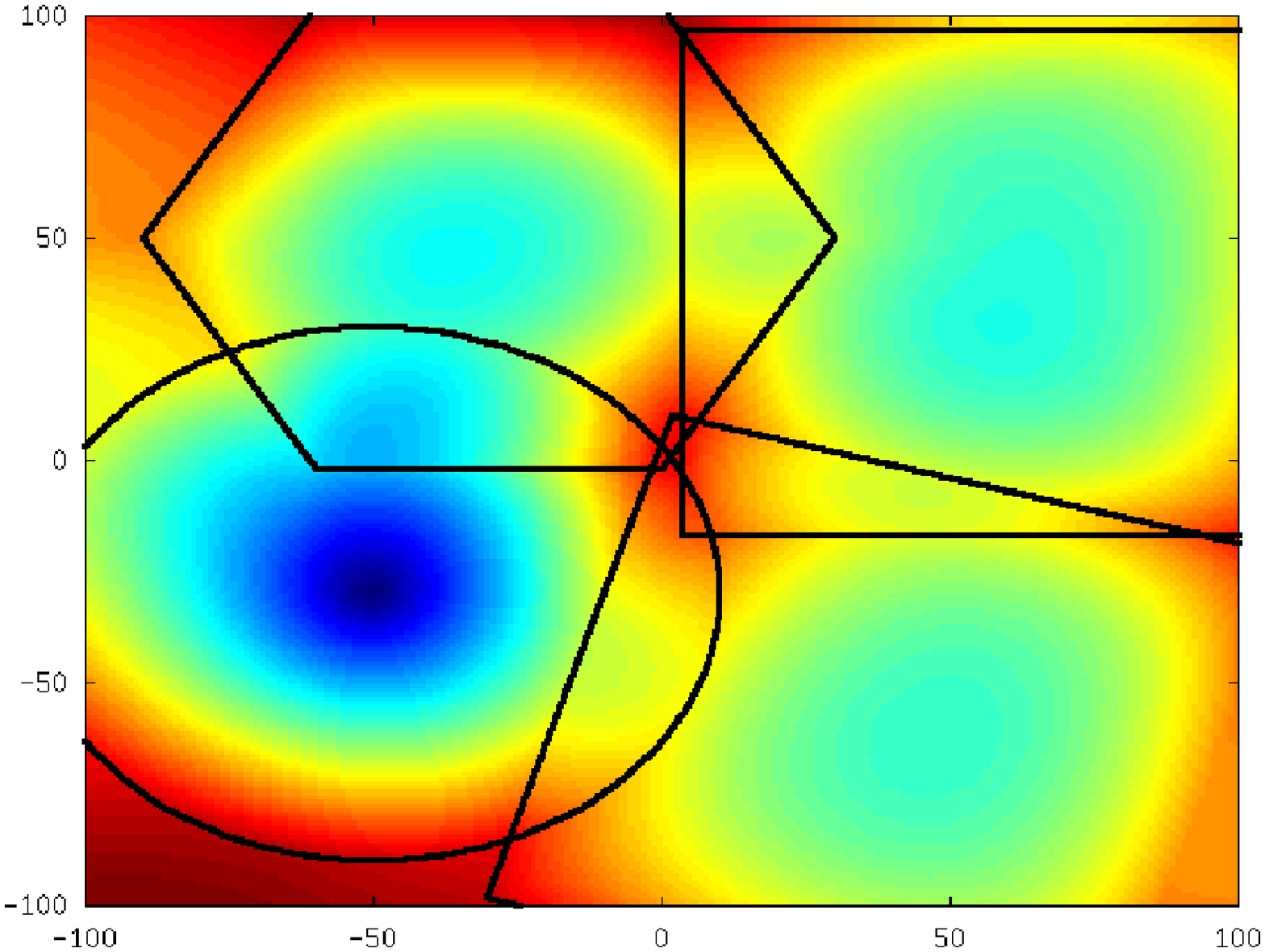}
\includegraphics[width=1.8in,height=1.8in]{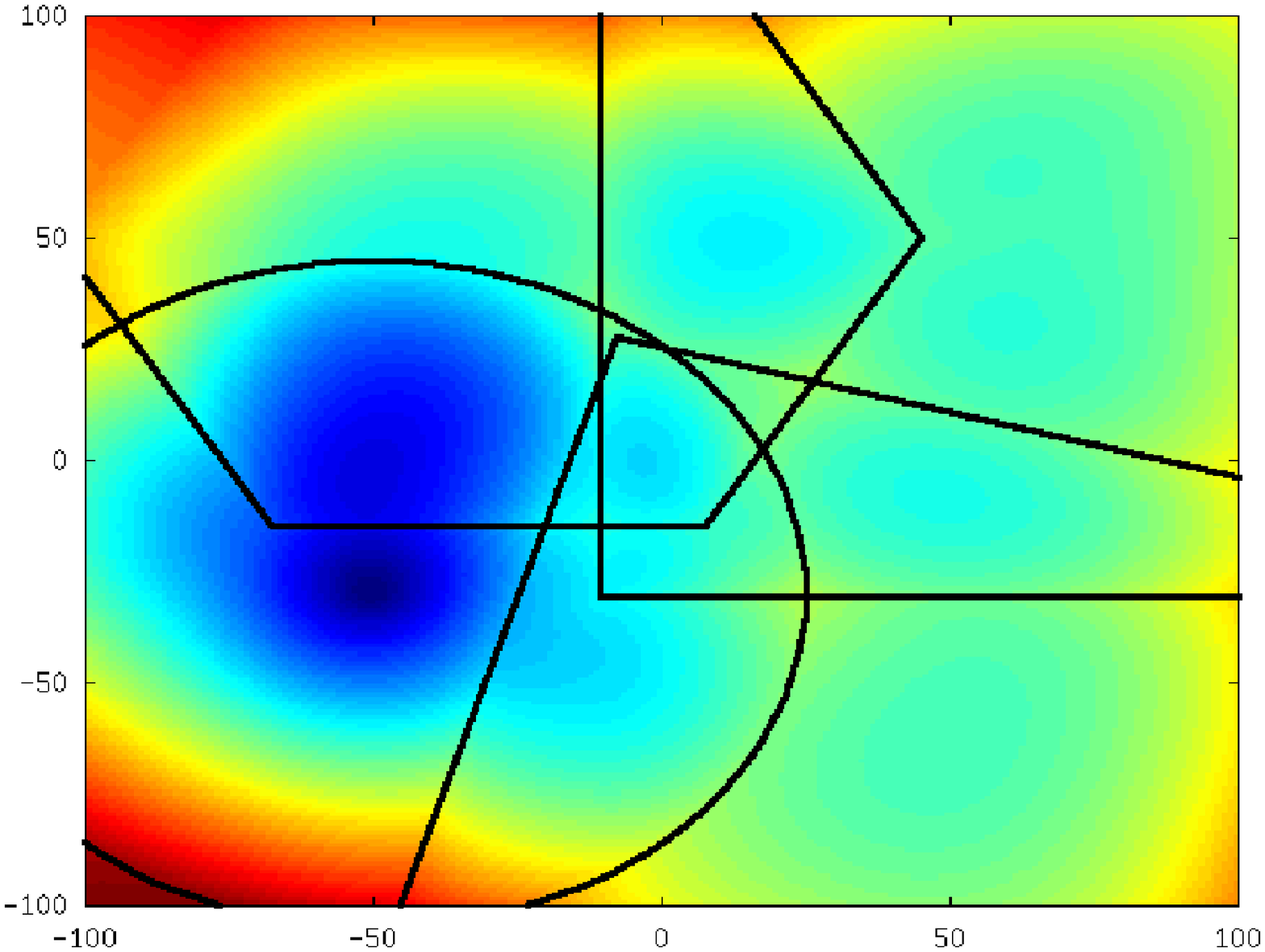}
\caption{Circular ($\ell_2$) Euler-Bessel transforms of a collection of interfering target supports reveals the location of the round target.}
\label{fig:shapefinder1}
\end{center}
\end{figure}

\begin{figure}
\begin{center}
\includegraphics[width=1.8in,height=1.8in]{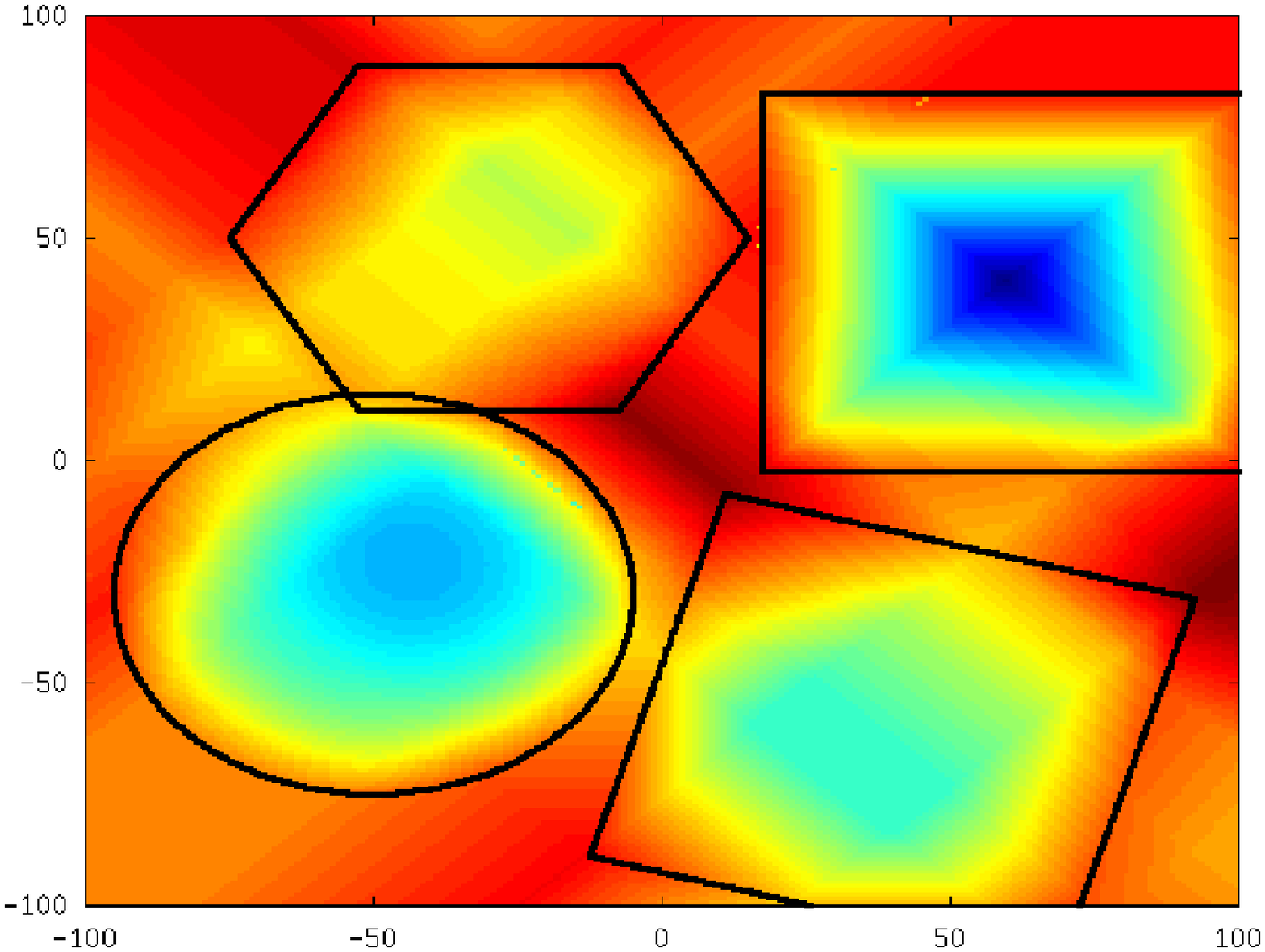}
\includegraphics[width=1.8in,height=1.8in]{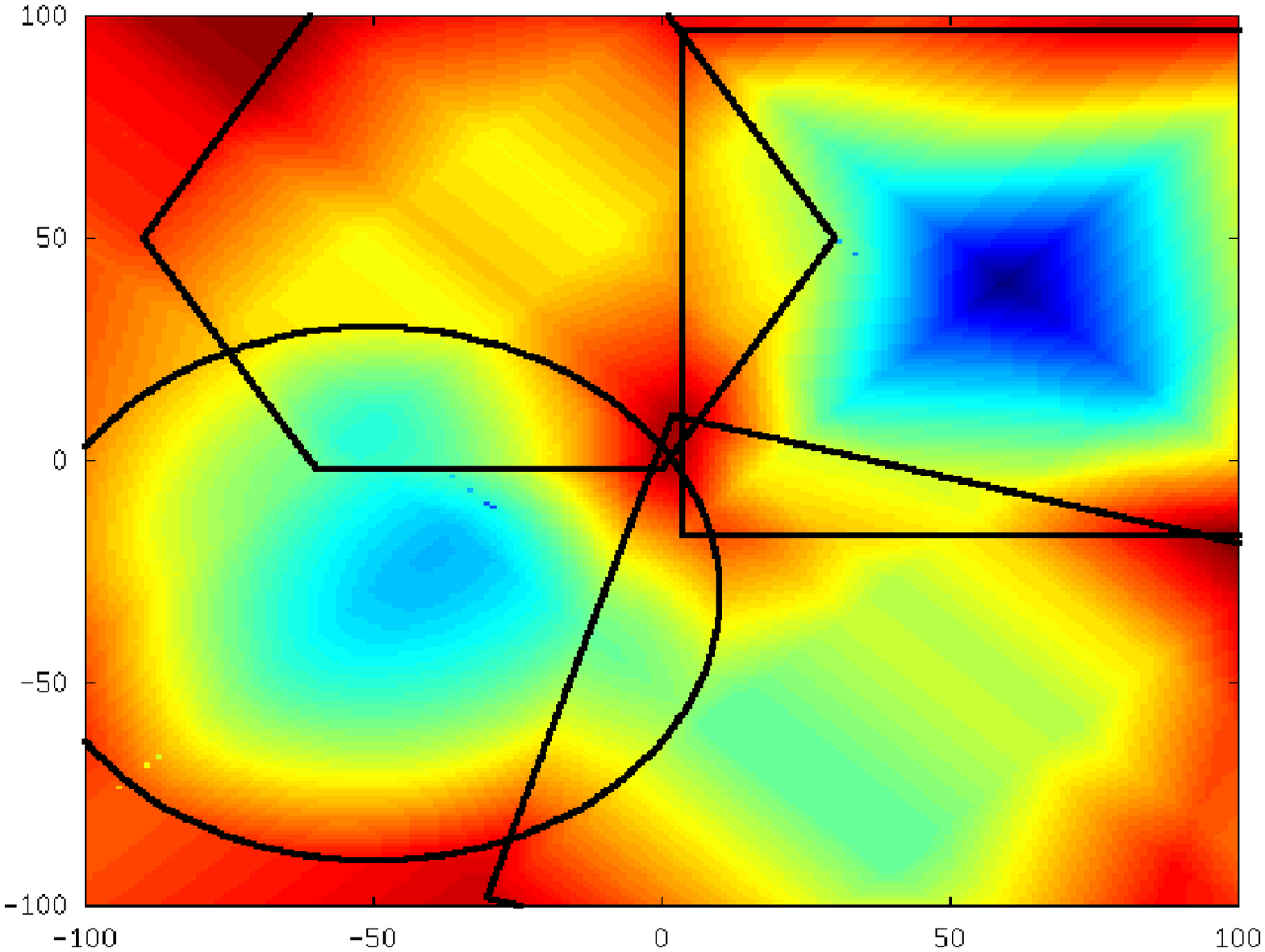}
\includegraphics[width=1.8in,height=1.8in]{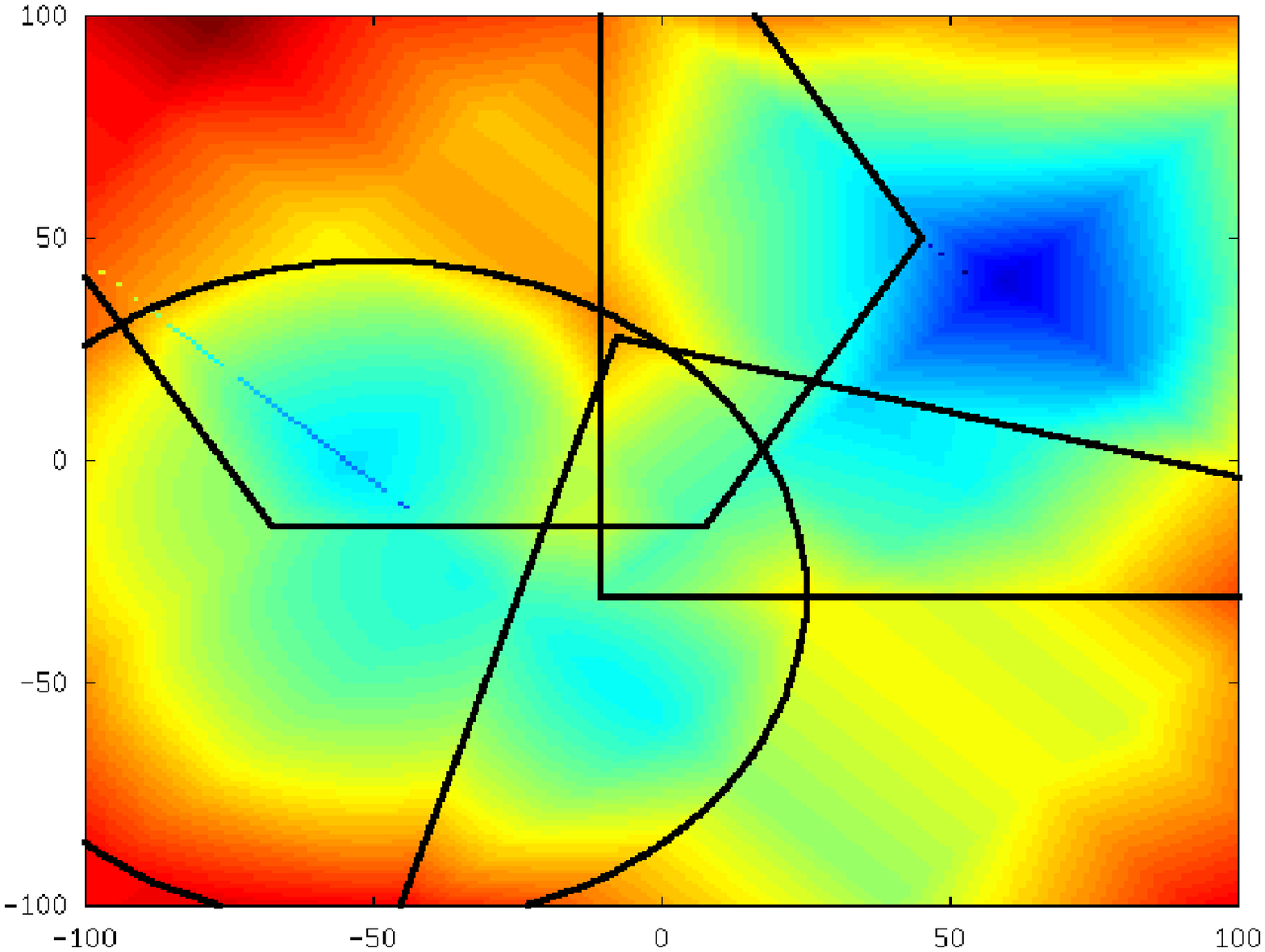}
\caption{Square ($\ell_\infty$) Euler-Bessel transforms of a collection of interfering target supports reveal the location of the square target.}
\label{fig:shapefinder2}
\end{center}
\end{figure}

\begin{figure}
\begin{center}
\includegraphics[width=1.8in,height=1.8in]{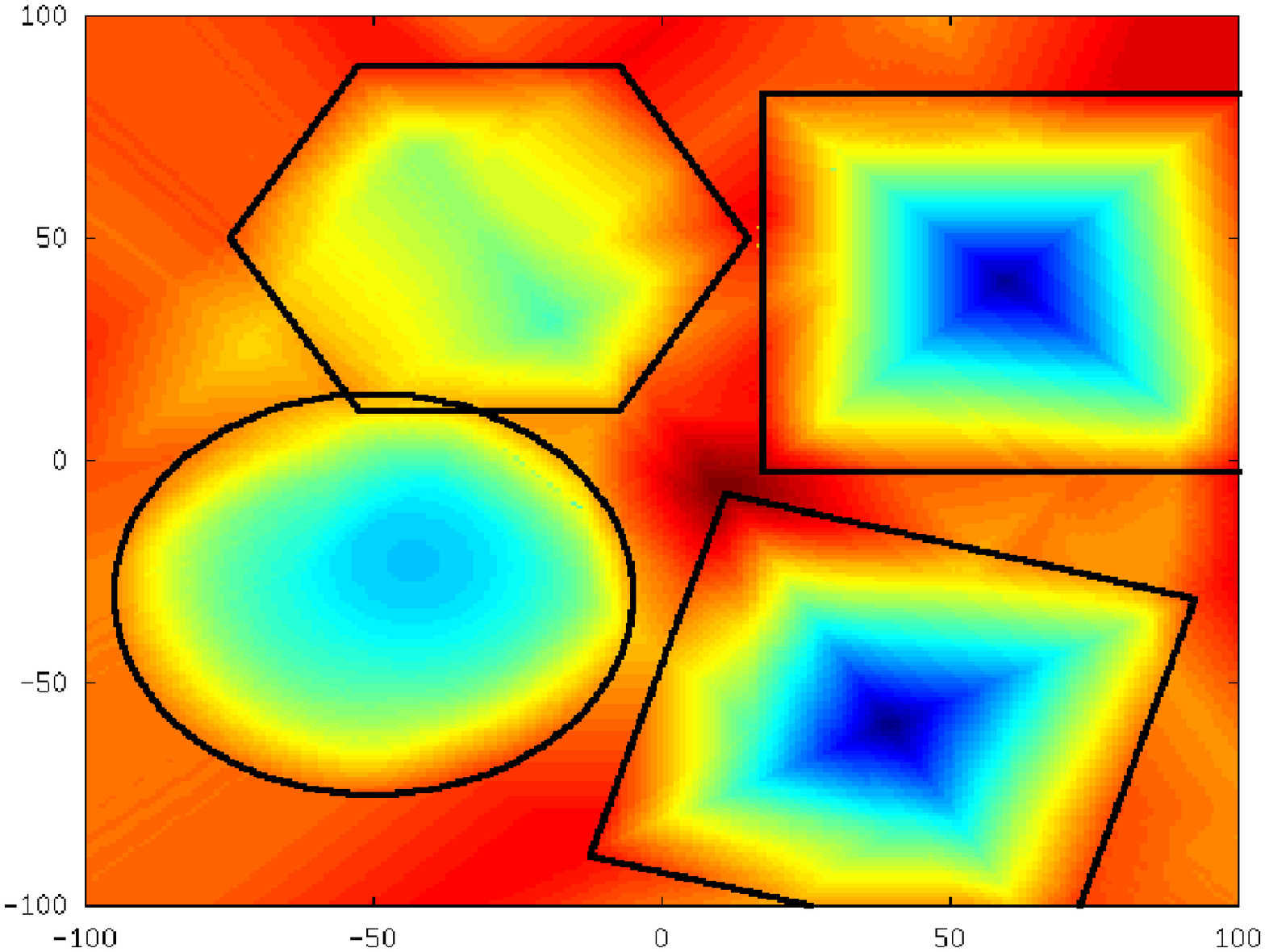}
\includegraphics[width=1.8in,height=1.8in]{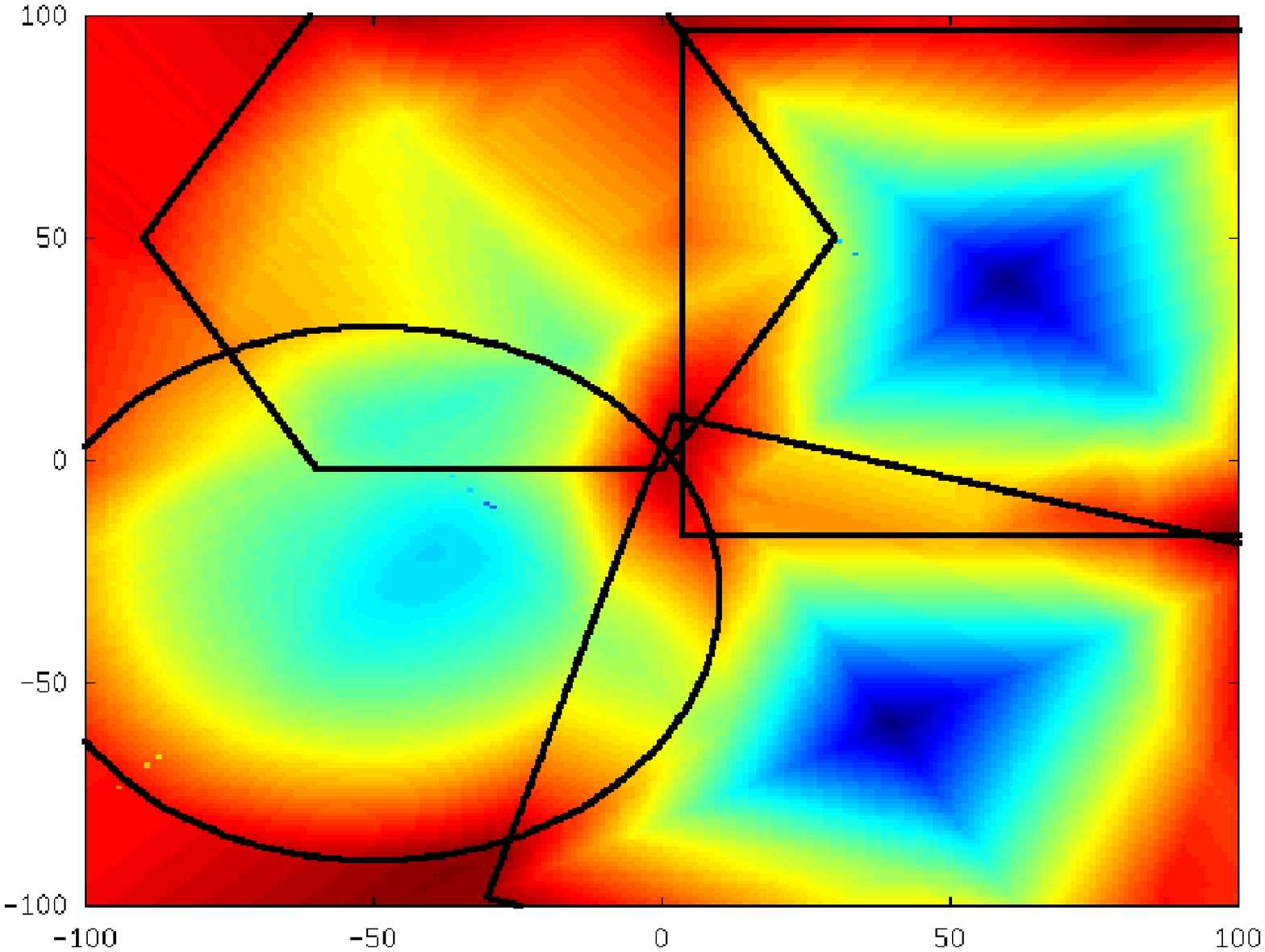}
\includegraphics[width=1.8in,height=1.8in]{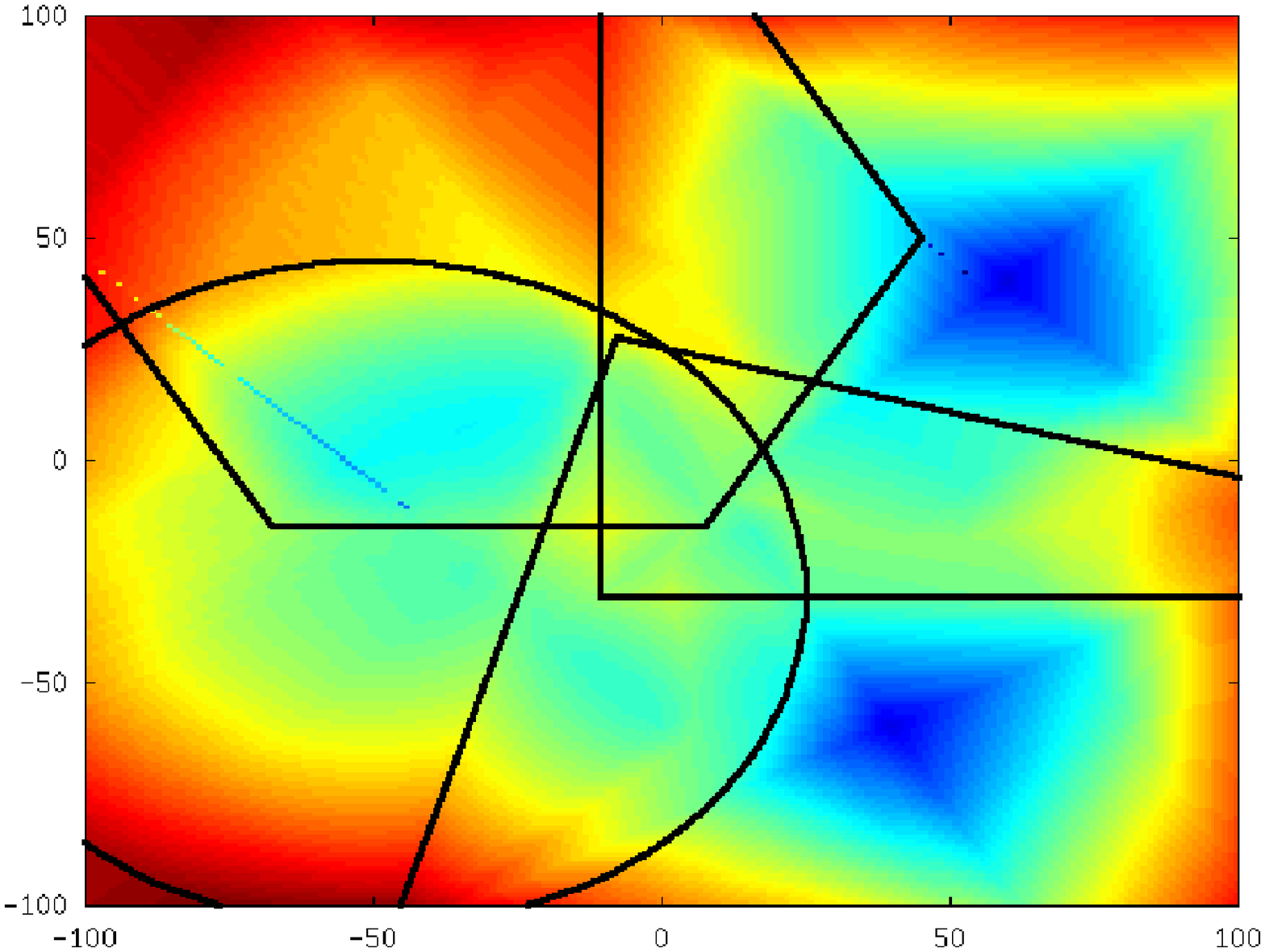}
\caption{SVA (rotated $\ell_\infty$) Euler-Bessel transforms of a collection of interfering target supports reveal the location of the square targets, independent of orientation.}
\label{fig:shapefinder3}
\end{center}
\end{figure}

Three variants of the Euler-Bessel transform were applied to the functions: an $\ell_2$ transform, an $\ell_\infty$ transform , and an SVA (rotated $\ell_\infty$) transform. The deep local minima of each transform correspond to the likely centers for the selected target shapes, in accordance with Proposition \ref{prop:monotone}.  For instance, with the $\ell_2$ transform, although there are minima at the center of the two squares, they are not as deep as the minimum at the center of the disk.  Similarly, the square transform has its deepest minimum at the center of the axis-aligned square, and the SVA transform detects both squares. Even when the support regions overlap, the Euler-Bessel transforms still successfully discriminate target geometries, though too much overlap generates interference and obscures the target geometries (Figure \ref{fig:shapefinder4}).

\begin{figure}
\begin{center}
\includegraphics[width=1.75in,height=1.75in]{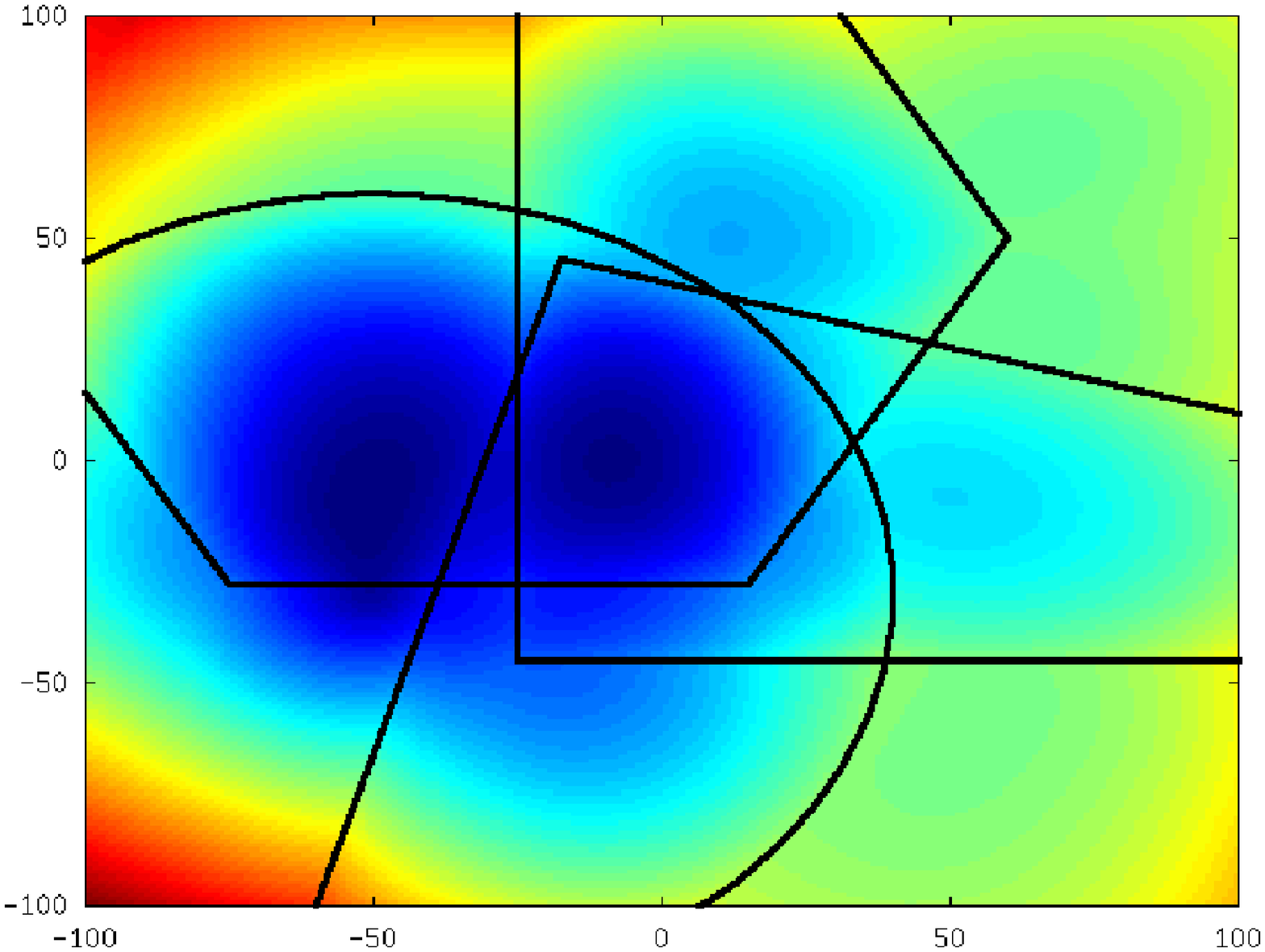}
\includegraphics[width=1.75in,height=1.75in]{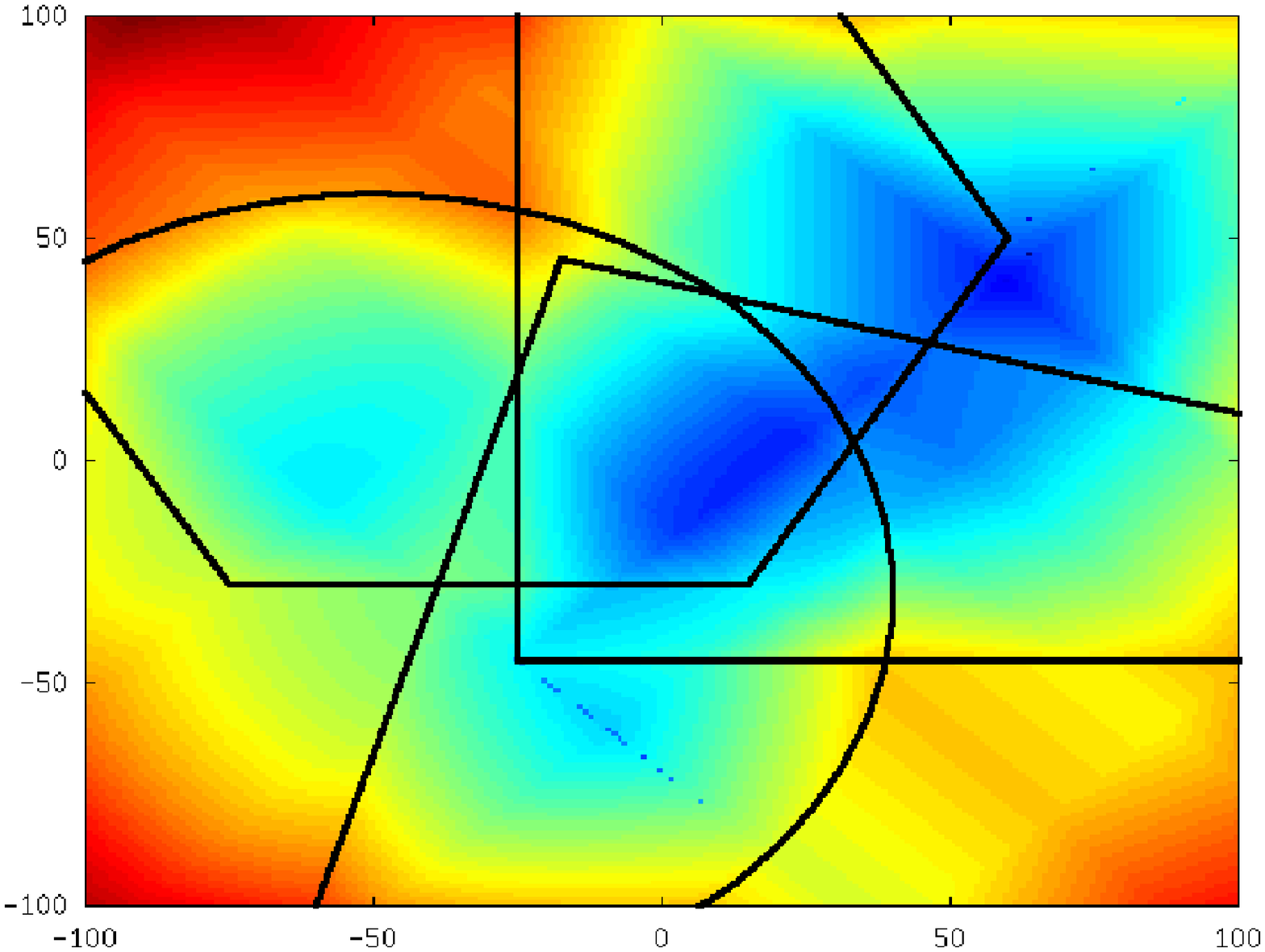}
\includegraphics[width=1.75in,height=1.75in]{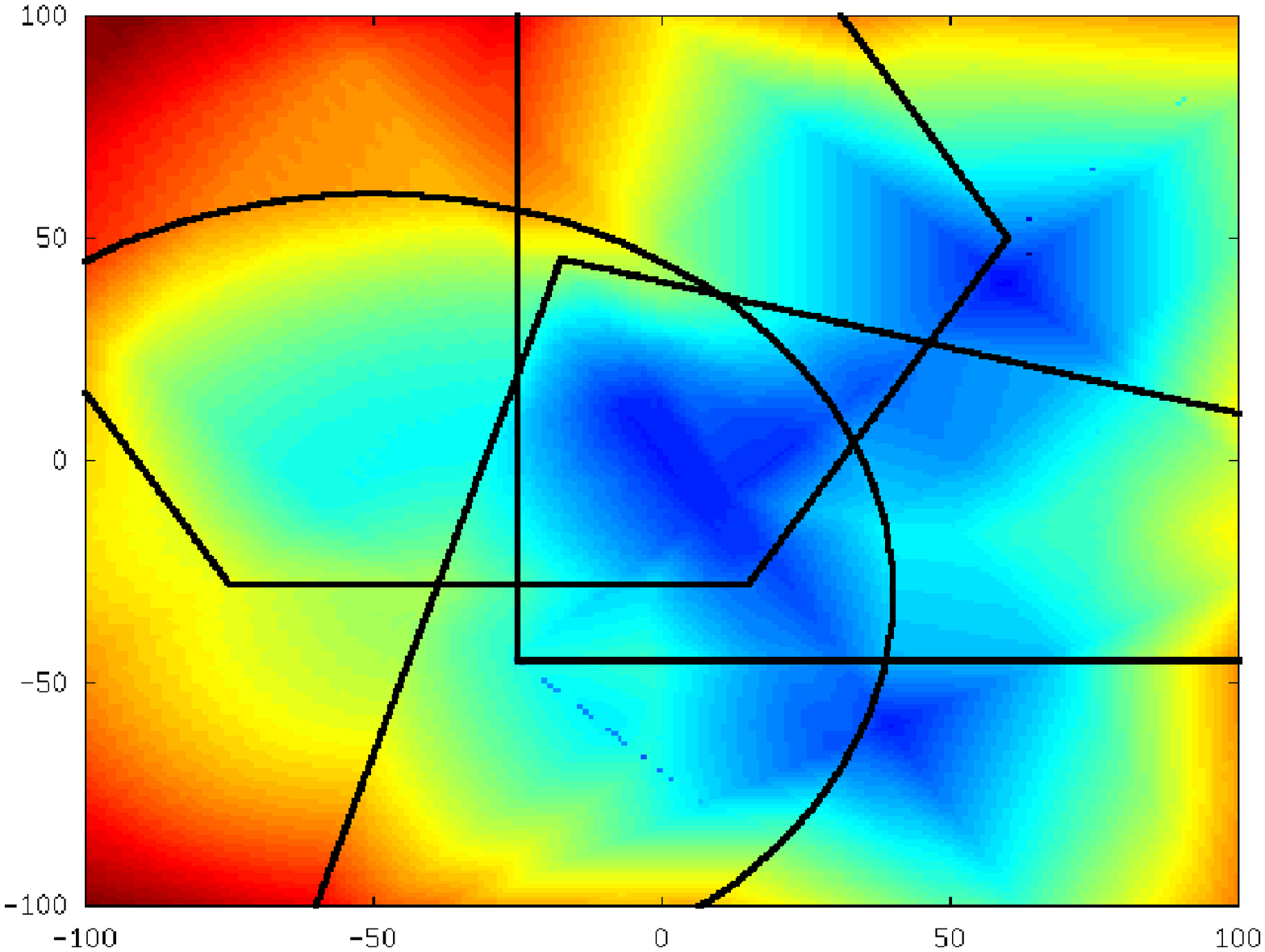}
\caption{When target overlaps are too great, false minima appear in the Euler-Bessel transforms ($\ell_2$, $\ell_\infty$, and SVA $\ell_\infty$)}
\label{fig:shapefinder4}
\end{center}
\end{figure}

Future work will explore the use of SVA in the context of Euler integral transforms.


\section{Concluding remarks}
\label{sec_conc}

\begin{enumerate}
\item
    The index formulae (Theorems \ref{thm:besseldchi} and \ref{thm:fourierdchi}) for the Bessel and Fourier transforms act as a localization of the integral transform and lead to closed-form expressions; this is essential to the computations/simulations in the present paper.
\item
    We have left unaddressed several important issues, such as the existence and nature of inverse and discrete transforms. For inverses to a broad but distinct class of Euler integral transforms, see \cite{Schapira:tom,BGL:radon}.
\item
    Computational issues for discrete Euler-Bessel and Euler-Fourier transforms remain a significant challenge. Though individual Euler integrals can be efficiently approximated on planar networks \cite{BG}, the integral transforms of this paper are, at present, computation-intensive.
\item
    There is no fundamental obstruction to defining $\Fourier$ and $\Bessel$ over $\Def(V)$, so that the transforms act on real-valued (definable) functions. In this case, one would have to distinguish between $\dchifloor$ and $\dchiceil$ versions of the transforms. It is not clear what such transforms measure -- geometrically or topologically -- or which index formulae might persist. The passage to definable inputs means that these transforms will no longer be linear (as neither $\dchifloor$ nor $\dchiceil$ is).
\item
    The use of SVA methods is one of many possible points of contact between the radar signal processing community and Euler integration.
\end{enumerate}

\ack
This work supported by DARPA \# HR0011-07-1-0002 and ONR N000140810668.

\section*{Bibliography}
\bibliographystyle{plain}

\end{document}